\patchcmd{\@setaddresses}{\indent}{\noindent}{}{}
\patchcmd{\@setaddresses}{\indent}{\noindent}{}{}
\patchcmd{\@setaddresses}{\indent}{\noindent}{}{}
\patchcmd{\@setaddresses}{\indent}{\noindent}{}{}
\DeclareMathSizes{\@xpt}{\@xpt}{6}{5}
\def\namedlabel#1#2{\begingroup
    #2%
    \def\@currentlabel{#2}%
    \phantomsection\label{#1}\endgroup
}
\theoremstyle{plain}
\newtheorem{theorem}{Theorem}[section]
\newtheorem{lemma}[theorem]{Lemma}
\newtheorem{proposition}[theorem]{Proposition}
\newtheorem{corollary}[theorem]{Corollary}
\newtheorem*{theorem*}{Theorem}
\theoremstyle{definition}
\newtheorem{definition}[theorem]{Definition}
\newtheorem{example}[theorem]{Example}
\theoremstyle{remark}
\newtheorem{remark}[theorem]{Remark}
\newcommand{\R}{\mathbb{R}}
\newcommand{\K}{\Bbbk}
\newcommand{\op}[1]{#1^{\mathrm{op}}} 
\newcommand{\mf}[1]{\mathfrak{#1}} 
\newcommand{\ms}[1]{\mathsf{#1}}
\renewcommand{\ker}{\mathsf{ker}} 
\newcommand{\id}{\mathsf{Id}} 
\newcommand{\alg}{\mathsf{Alg}}
\newcommand{\algk}{\alg_{\K}} 
\newcommand{\Set}{{\mathsf{Set}}} 
\newcommand{\Top}{{\mathsf{Top}}} 
\newcommand{\Haus}{{\mathsf{Haus}}} 
\newcommand{\Rmod}[1]{\M_{#1}} 
\newcommand{\gpmod}[1]{\mathsf{gPMod}_{#1}} 
\renewcommand{\mod}[1]{\mathsf{Mod}_{#1}} 
\newcommand{\cC}{{\mathcal C}}
\newcommand{\cI}{{\mathcal I}}
\newcommand{\M}{\mathsf{Mod}} 
\newcommand{\ie}{i.e.~}
\newcommand{\eg}{e.g.~}
\renewcommand{\top}{\mathbf{\uptau}}
\newcommand{\ot}{\otimes}
\newcommand{\bul}{\bullet}
\newbox\pullbackbox
\newbox\pushoutbox
\def\pushout{\copy\pushoutbox}
\definecolor{bostonuniversityred}{rgb}{0.8, 0.0, 0.0}
\title[Globalization of geometric partial (co)modules in TOP and ALG]{On the globalization of geometric partial (co)modules in the categories of topological spaces and algebras}
\author{Paolo Saracco}
\address{D\'epartement de Math\'ematique, Universit\'e Libre de Bruxelles, Boulevard du Triomphe, B-1050 Brussels, Belgium.}
\urladdr{\url{sites.google.com/view/paolo-saracco}}
\urladdr{\url{homepages.ulb.ac.be/~psaracco}}
\email{paolo.saracco@ulb.be}
\author{Joost Vercruysse}
\address{D\'epartement de Math\'ematique, Universit\'e Libre de Bruxelles, Boulevard du Triomphe, B-1050 Brussels, Belgium.}
\urladdr{\url{homepages.ulb.ac.be/~jvercruy}}
\email{jvercruy@ulb.be}
\thanks{PS is a Charg\'e de Recherches of the Fonds de la Recherche Scientifique - FNRS and a member of the National Group for Algebraic and Geometric Structures and their Applications (GNSAGA-INdAM).
JV thanks the FNRS (National Research Fund of the French speaking community in Belgium) for support via the MIS project `Antipode' (Grant F.4502.18).}
\keywords{Globalization; partial action; partial coaction; geometric partial module; geometric partial comodule; partial comodule algebra.}
\subjclass[2010]{16T15, 16W22, 18A40} 
\begin{document}

\begin{abstract}
We study the globalization of partial actions on sets and topological spaces and of partial coactions on algebras by applying the general theory of globalization for geometric partial comodules, as previously developed by the authors. We show that this approach does not only allow to recover all known results in these settings, but it allows to treat new cases of interest, too. 
\end{abstract}

\maketitle



\section{Introduction}

Since the very beginning of the theory of partial group actions \cite{Exel1}, one of the main questions has been to understand if any given partial action can be obtained as a restriction of a classical (global) group action \cite{Abadie,AlvesBatista,Hollings, KellendorkLawason, Megrelishvili}.

The {\em geometric partial (co)modules} from \cite{JoostJiawey} provide a general categorical framework to study all sorts of partial actions in a unified way, subsuming partial actions of groups as well as partial (co)representations of Hopf algebras (see  \cite{Saracco-Vercruysse3} for a detailed treatment of the globalization question in these cases). Moreover, geometric partial comodules also allow to treat cases that cannot be described by the Hopf-algebraic partial (co)actions from
 \cite{Caenepeel-Janssen}, such as genuine partial actions of algebraic groups on irreducible varieties. In a previous paper \cite{Saracco-Vercruysse}, we defined and studied globalizations for geometric partial comodules. In the present paper we apply the general results from \cite{Saracco-Vercruysse} to discuss in more detail the globalization results for partial actions of topological monoids on topological spaces and partial comodule algebras. We show that our approach (Theorem \ref{thm:topmain}) not only allows to recover and unify the globalization  results for topological partial actions from \cite{Abadie} and \cite{Megrelishvili} (see Corollary \ref{topactisglob}), but it also allows to treat globalizations in new cases (see Example \ref{ggpmnotparact}). 
Next, we consider the partial comodule algebras (also called partial coactions) over bialgebras from \cite{Caenepeel-Janssen} and show that these are globalizable (Theorem \ref{prop:globPCA}). Finally we explain how our globalization for partial comodule algebras differs from the enveloping coaction from \cite{AlvesBatista} (Proposition \ref{pr:comodalg}).

We denote identity on an object $X$ by $\id_X$ or simply by $X$ itself.

\section{The motivating example: Partial actions of monoids on sets}

\subsection{Categorical formulation of partial actions}
\label{se:partact}

A partial action of a monoid on a set is, intuitively, a ``partially defined'' action, satisfying unitality and associativity conditions, whenever these make sense. Let us make this more explicit.
Fix a monoid $M$ with composition law $\Delta:M\times M\to M$ and neutral element $u:\{*\} \to M, *\mapsto e$. 
A {\em partial action datum} over $M$ is a quadruple $(X,X\bul M,\pi_X,\rho_X)$ consisting of two sets, $X$ and $X \bul M$, and of a span  
\begin{equation}\label{datum}
\begin{gathered}
\xymatrix @C=18pt @R=8pt {
X\times M & & X \\
 & X\bul M \ar@{ >->}[ul]^-{\pi_X} \ar[ur]_-{\rho_X} & 
}
\end{gathered}
\end{equation}
in $\Set$, where $\pi_X$ is an injective map. The set $X\bul M$ can be thought of as those ``compatible pairs'' for which the action is well-defined. For every $m\in M$, put $X_m\coloneqq \left\{x \in X\mid (x,m)\in X\bul M\right\}$ and $\alpha_m:X_m\to X, x\mapsto \rho_X(x,m)$. The set $X_m$ is the domain for the action by the element $m$. For the sake of simplicity, we will often write $x\cdot m \coloneqq \alpha_m(x) = \rho_X(x,m)$. We can now consider the following pullbacks:
\begin{equation}\label{eq:pushcan2}
\begin{gathered}
\xymatrix @!0 @R=30pt @C=53pt{
& X\times M \\
(X\bul M)\times M \ar[ur]^(0.45){\rho_X\times M} & & X\bul M \ar@{ >->}[ul]_(0.45){\pi_X} \\
&(X\bul M)\bul M\ar@{}[u]|<<<{\pushout} \ar[ur]_(0.6){\rho_X\bul M} \ar@{ >->}[ul]^(0.6){\pi_{X\bul M}}
}
\end{gathered}
\,\ \text{ and } \,\
\begin{gathered}
\xymatrix @!0 @R=30pt @C=53pt{
 & X\times M & \\
(X\bul M) \times M \ar[ur]^(0.4){(X\times \Delta)\circ(\pi_X \times M)\quad } & & X\bul M \ar@{ >->}[ul]_-{\pi_X} \\
& X\bul(M\bul M) \ar@{}[u]|<<<{\pushout} \ar@{ >->}[ul]^(0.6){\pi_{X,\Delta}} \ar[ur]_(0.6){X\bul \Delta} &
}
\end{gathered}
\end{equation}
Explicitly, these pullbacks can be described as the following sets:
\begin{align*}
(X\bul M)\bul M & = \left\{(x,m,n)\in X\bul M\times M\mid (x\cdot m,n)\in X\bul M\right\} \notag \\
 & = \left\{(x,m,n)\in X\times M\times M\mid x\in X_m\ \text{and}\ x\cdot m \in X_n\right\}, \\
X\bul (M\bul M) & = \left\{(x,m,n)\in X\bul M\times M\mid (x,mn)\in X\bul M\right\} \\ 
 & = \left\{(x,m,n)\in X\times M\times M\mid x\in X_m\ \text{and}\ x\in X_{mn}\right\}. 
\end{align*}
The quadruple $(X,X\bul M,\pi_X,\rho_X)$ is called a {\em partial action} of $M$ on $X$ if the following two axioms are satisfied.
\begin{enumerate}[label=({PA\arabic*}),leftmargin=1.1cm]
\item\label{item:QPC1} Unitality: 
$X_e=X$ and $\alpha_e=\id_X$.
Equivalently, 
there exists a morphism $X\bul u:X\to X\bul M$ which makes the following diagram commutative
\begin{equation}\label{eq:counitstrict}
\begin{gathered}
\xymatrix @!0 @R=30pt @C=65pt{
X \times M & X\bul M \ar@{<-}[d]_-{X\bul u} \ar@{ >->}[l]_-{\pi_X} \ar[r]^-{\rho_X} & X \\
& X. \ar[ul]^-{X\times u} \ar[ur]_-{\id_X}
}
\end{gathered}
\end{equation}
\item\label{item:QPC2} Partial associativity: 
$\alpha_m^{-1}(X_n) = X_{mn}\cap X_m$ and $\alpha_n\circ\alpha_m = \alpha_{mn}$ on $\alpha_m^{-1}(X_n)$ for all $m,n\in M$.
Equivalently, there is an isomorphism (equality, in fact)
\[\theta:(X\bul M)\bul M\to X\bul(M\bul M)\] 
such that the following diagram commutes
\begin{equation}\label{eq:geomcoass}
\begin{gathered}
\xymatrix @!0 @R=35pt @C=47pt{
X \ar@{<-}[rr]^-{\rho_X} \ar@{<-}[d]_-{\rho_X} & & X\bul M \ar@{<-}[rr]^-{\rho_X\bul M} & & (X\bul M)\bul M \ar[dll]_-{\theta} \ar@{ >->}[d]^-{\pi_{X\bul M}}
 \\
X\bul M & & X\bul(M\bul M) \ar[ll]^-{X\bul \Delta} \ar@{ >->}[rr]_{\pi_{X,\Delta}} & & (X\bul M)\times M. 
}
\end{gathered}
\end{equation}
\end{enumerate}

\begin{remark}
One can easily verify that the definition of partial action of a monoid as given above is equivalent to those considered for example in \cite[Definition 2.3]{Megrelishvili} and in \cite[Definition 2.4]{Hollings}. Moreover, it was shown in \cite[\S1]{JoostJiawey} that, for $M$ a group, one recovers the definition of partial group actions as given in \cite[Definition 1.2]{Exel-set}.

The above definitions of partial action datum and partial action can obviously be extended to the setting of monoids in arbitrary monoidal categories with pullbacks. Indeed, if $(M,\Delta,u)$ is a monoid (or algebra) in the monoidal category $(\cC,\otimes,{\mathbb I})$, then we can define {\em partial module data} $(X,X\bul M,\pi_X,\rho_X)$ and {\em geometric partial modules} by simply replacing the cartesian product by the monoidal product in diagrams \eqref{datum}, \eqref{eq:pushcan2}, \eqref{eq:counitstrict} and \eqref{eq:geomcoass} above.
This is the viewpoint of \cite{JoostJiawey}, where the dual notions of {\em partial comodule datum} and {\em geometric partial comodule} over a comonoid (coalgebra) in an arbitrary monoidal category with pushouts $\cC$ were defined. 

If $(X,X\bul M,\pi_X,\rho_X)$ and $(Y,Y\bul M,\pi_Y,\rho_Y)$ are geometric partial modules , then a \emph{morphism of geometric partial modules} is a pair $(f,f\bul M)$ of morphisms in $\cC$ with $f:X\to Y$ and $f\bul M: X\bul M \to Y\bul M$ such that the following diagram commutes
\begin{equation}\label{eq:shield}
\begin{gathered}
\xymatrix @!0 @R=30pt @C=60pt {
X \ar[d]_-{f} \ar@{<-}[r]^-{\rho_X} & X\bul M \ar[d]_-{f\bul M} \ar@{ >->}[r]^-{\pi_X} & X\ot M \ar[d]^-{f\ot M} \\
Y \ar@{<-}[r]_-{\rho_Y} & Y \bul M \ar@{ >->}[r]_-{\pi_{Y}} & Y \ot M. 
}
\end{gathered}
\end{equation}
We will often denote a geometric partial module $(X,X\bul M,\pi_X,\rho_X)$ simply by $X$ and a morphism as above simply by $f$. Moreover, we will denote by $\gpmod{M}$ the category of geometric partial modules over $M$ and their morphisms and we will often omit to specify the adjective ``geometric'' when not needed.

Note also that any usual (global) $M$-module $(X, \delta_X)$ is a geometric partial module with $\pi_X\coloneqq \id_{X \ot M}$ and $\rho_X \coloneqq \delta_X$. In fact, $\Rmod{M}$ is a full subcategory of $\gpmod{M}$ and we denote by $\cI: \Rmod{M} \to \gpmod{M}$ the associated embedding functor.
\end{remark}

\subsection{The general globalization result}\label{ssec:globgen}

Let $(M,\Delta,u)$ be a monoid in a monoidal category $\cC$ with pullbacks.
Recall from (the dual of) \cite[Example 2.5]{JoostJiawey} that, for any (right) $M$-module $(Y,\delta)$ and any monomorphism $p : X \to Y$ in $\cC$, the pullback
\begin{equation}\label{eq:globcom}
\begin{gathered}
\xymatrix @!0 @R=18pt @C=57pt {
 & Y & \\
 X\otimes M \ar[ur]^(0.45){(p\otimes M)\circ\delta\ } & & X \ar@{ >->}[ul]_-{p} \\
 & X\bul M \ar@{}[u]|<<<{\pushout} \ar@{ >->}[ul]^-{\pi_X} \ar[ur]_-{\rho_X} &
}
\end{gathered}
\end{equation}
inherits a structure of geometric partial module and $p$ becomes a morphism of partial modules. We refer to this as the \emph{induced partial module} structure from $Y$ to $X$.

Naively speaking, the globalization of a partial module $X$ is a universal  $M$-module ``containing'' $X$ and such that the partial action is induced by the global one. The following definition is the straightforward dualization of \cite[Definition 3.11]{Saracco-Vercruysse}.

\begin{definition}\label{def:glob}
Given a partial module $(X,X\bul M,\pi_X,\rho_X)$, a \emph{globalization} for $X$ is a global module $(Y,\delta_Y)$ with a morphism $p:X\to Y$ in $\cC$ such that
\begin{enumerate}[label=(GL\arabic*),ref=(GL\arabic*),leftmargin=1.1cm]
\item\label{item:GL0} the following diagram commutes (i.e., $p$ is a morphism of partial modules) and it is a pullback diagram
\[
\begin{gathered}
\xymatrix @R=15pt{
X\ot M \ar[r]^-{p\ot M} & Y\ot M \ar[r]^-{\delta_Y} & Y \\
X\bul M \ar[u]^-{\pi_X} \ar[rr]_-{\rho_X} & & X; \ar[u]_p
}
\end{gathered}
\]
\item\label{item:GL3} it is universal, in the sense that the following correspondence is bijective
\[\Rmod{M}(Y,Z) \to \gpmod{M}(X,\cI(Z)), \qquad \eta \mapsto \eta \circ p.\]
\end{enumerate}
We say that $X$ is {\em globalizable} if a globalization for $X$ exists and we denote by $\gpmod{M}^{gl}$ the full subcategory of $\gpmod{M}$ of the globalizable partial modules. 
\end{definition}

It can be shown (see \cite[Lemma 3.2]{Saracco-Vercruysse}) that if $(Y,p)$ is a globalization of a partial module $X$, then $p:Y\to X$ is a monomorphism. 
Moreover, it follows from axiom \ref{item:GL3} that a globalization of a partial module is unique whenever it exists.

The following theorem is the main results of \cite{Saracco-Vercruysse}, Theorem 3.5, rephrased in its dual form for the sake of the reader.

\begin{theorem}\label{thm:globalization}
Let $M$ be a monoid in the monoidal category $\cC$ with pullbacks. Then a geometric partial $M$-module $X=(X,X\bul M,\pi_X,\rho_X)$ is globalizable if and only if 
\begin{enumerate}[label=(\alph*),ref={\itshape(\alph*)},leftmargin=0.6cm]
\item\label{item:glob1} the following coequalizer exists in $\mod{M}$:
\begin{equation}\label{eq:glob}
\begin{gathered}
\xymatrix@C=30pt{
(X\bul M\otimes M, X\bul M\ot \Delta) \ar@<+0.8ex>[rr]^-{\rho_X\ot M} \ar@<-0.4ex>[rr]_-{(\pi_X\ot M)\circ(X \otimes \Delta)} & &  (X\otimes M,X\ot \Delta) \ar@<+0.2ex>[r]^-{\kappa} & (Y_X,\delta);
}
\end{gathered}
\end{equation}
\item\label{item:glob3} the following diagram is a pullback diagram in $\cC$:
\begin{equation}\label{eq:GXglob}
\begin{gathered}
\xymatrix @!0 @R=18pt @C=60pt {
 & Y_X & \\
 X \otimes M \ar[ur]^(0.45){\kappa} & & X \ar[ul]_(0.4){\kappa\circ (X\ot u)} \\
 & X\bul M. \ar@{ >->}[ul]^-{\pi_X} \ar[ur]_-{\rho_X} &
}
\end{gathered}
\end{equation}
\end{enumerate}
Moreover, if these conditions hold, then the morphism $\epsilon_X \coloneqq \kappa\circ (X\ot u):X\to Y_X$ is a monomorphism in $\cC$,
$\kappa = \delta\circ (\epsilon_X \otimes M)$ and $(Y_X,\epsilon_X)$ is the globalization of $X$.
\end{theorem}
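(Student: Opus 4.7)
My approach is to dualize the proof of \cite[Theorem 3.5]{Saracco-Vercruysse}, and I would prove the two directions separately before addressing the concluding identifications.

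For the necessity direction, assuming that $(Y,p)$ globalizes $X$, I would set $\kappa := \delta_Y \circ (p \ot M) : (X \ot M, X \ot \Delta) \to (Y,\delta_Y)$, which is a morphism in $\mod{M}$. A short diagram chase invoking the pullback \ref{item:GL0} (to replace $p\circ \rho_X$ by $\delta_Y \circ (p\ot M)\circ \pi_X$) together with the associativity of $\delta_Y$ shows that $\kappa$ coequalizes $\rho_X \ot M$ and $(X \ot \Delta)\circ(\pi_X\ot M)$. To prove universality of this coequalizer in $\mod{M}$, I would convert any coequalizing $M$-module morphism $g : (X\ot M,X\ot\Delta)\to(Z,\delta_Z)$ into a partial-module morphism $X\to\cI(Z)$ whose global component is $g\circ(X\ot u)$, and then invoke \ref{item:GL3}. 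The pullback \eqref{eq:GXglob} is then just \ref{item:GL0} rewritten after identifying $Y_X = Y$, $p = \epsilon_X$ and $\kappa = \delta_Y\circ(p\ot M)$.

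For the sufficiency direction, assume \ref{item:glob1} and \ref{item:glob3} and set $\epsilon_X := \kappa \circ (X\ot u)$. Because $\kappa$ is an $M$-module morphism from $(X\ot M, X\ot\Delta)$ to $(Y_X,\delta)$, unitality of $M$ yields
\[
\delta\circ(\epsilon_X\ot M) = \delta\circ(\kappa\ot M)\circ(X\ot u\ot M) = \kappa\circ(X\ot\Delta)\circ(X\ot u\ot M) = \kappa,
\]
so the pullback \ref{item:glob3} becomes exactly \ref{item:GL0} for $(Y_X,\epsilon_X)$. To verify \ref{item:GL3}, given $f : X \to \cI(Z)$ in $\gpmod{M}$, I would check that $\delta_Z\circ(f\ot M)$ is an $M$-module morphism from $(X\ot M, X\ot\Delta)$ to $(Z,\delta_Z)$ coequalizing the pair of \eqref{eq:glob} --- this reduces to the associativity of $\delta_Z$ together with the compatibility $\delta_Z\circ(f\ot M)\circ \pi_X = f\circ\rho_X$ of $f$ with the partial structure. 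The coequalizer property then produces a unique $M$-module morphism $\bar f:Y_X\to Z$ with $\bar f\circ\kappa=\delta_Z\circ(f\ot M)$, and the unit axiom for $\delta_Z$ gives $\bar f\circ\epsilon_X=f$. The monomorphicity of $\epsilon_X$ is then automatic from the dualization of \cite[Lemma 3.2]{Saracco-Vercruysse} mentioned after Definition \ref{def:glob}.

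The step I expect to be most delicate is the verification inside the sufficiency direction that $\delta_Z\circ(f\ot M)$ coequalizes the two legs of \eqref{eq:glob}. This requires threading the partial associativity of $X$ (encoded via the two pullbacks in \eqref{eq:pushcan2}) through the compatibility of $f$ with $\pi_X$ and $\rho_X$ and matching it against the global associativity of $\delta_Z$, which is essentially the dual of the central calculation underlying the construction of the enveloping action in \cite{Saracco-Vercruysse}.
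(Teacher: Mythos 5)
The paper does not actually prove this theorem itself --- it is presented as the dual restatement of \cite[Theorem 3.5]{Saracco-Vercruysse} --- and your proposal is exactly the intended argument, namely the formal dualization of that proof (necessity via $\kappa:=\delta_Y\circ(p\ot M)$ and \ref{item:GL3}, sufficiency via the universal property of the coequalizer \eqref{eq:glob} applied to $\delta_Z\circ(f\ot M)$). The steps you flag as delicate do go through: the coequalizing property of $\delta_Z\circ(f\ot M)$ follows from the morphism identity $\delta_Z\circ(f\ot M)\circ\pi_X=f\circ\rho_X$ together with associativity of $\delta_Z$, precisely as you indicate.
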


\subsection{Recovering the globalization of partial actions of groups and monoids}

Let us return to the situation where the monoid $M$ (in $\Set$) acts partially on the set $X$. 
Then the following coequalizer (in $\Set$)
\begin{equation}\label{eq:coeq}
\xymatrix @C=30pt {
X\bul M \times M \ar@<+0.5ex>[rr]^-{\rho_X\times M} \ar@<-0.5ex>[rr]_-{(X\times \Delta) \circ (\pi_X\times M)} & & X\times M \ar[r]^-{\kappa} & Y_X
}
\end{equation}
is given by $Y_X=(X\times M)/R$, where $R\subseteq (X\times M)\times (X\times M)$ is the equivalence relation generated by $r = \big\{\big((x\cdot m,n),(x,mn)\big)\mid m,n\in M, x\in X_m\big\}$. Since the endofunctor $-\times M:\Set\to\Set$ is a left adjoint, it preserves coequalizers and hence $Y_X$ inherits in a natural way a global action from $X\times M$. Explicitly, if $[x,m]$ denotes the class of $(x,m)$ in $Y_X$, then the global action of $M$ on $Y_X$ is given by $[x,m]\triangleleft n \coloneqq [x,mn]$.
Applying our globalization Theorem \ref{thm:globalization}, we find that $Y_X$ will be the globalization of $X$ if \eqref{eq:GXglob} is a pullback diagram. This was essentially proven in \cite[Proposition 2.6]{Megrelishvili}. Hence we can conclude the following result, which then shows that the globalization for monoids as described in \cite[\S2]{Megrelishvili} is a special instance of the globalization for geometric partial modules discussed in the previous section.

\begin{corollary}\label{cor:Set}
For $\cC = \Set$, we have $\gpmod{M}^{gl} = \gpmod{M}$ for every monoid $M$.
\end{corollary}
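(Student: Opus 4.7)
The plan is to apply Theorem~\ref{thm:globalization} to an arbitrary geometric partial $M$-module $X = (X, X\bul M, \pi_X, \rho_X)$ in $\cC = \Set$ and to verify that both conditions \ref{item:glob1} and \ref{item:glob3} are automatically satisfied.

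For condition \ref{item:glob1}, first observe that $\Set$ is cocomplete, so the coequalizer \eqref{eq:coeq} exists; as recalled just before the statement of the corollary, it is given by the quotient $Y_X = (X \times M)/R$, where $R$ is the equivalence relation generated by the pairs $((x\cdot m, n), (x, mn))$ with $(x, m) \in X \bul M$ and $n \in M$. To upgrade this to a coequalizer in $\mod{M}$, I would invoke the fact that the endofunctor $-\times M : \Set \to \Set$ is a left adjoint (since $\Set$ is cartesian closed) and hence preserves coequalizers; this forces the formula $[x, m] \triangleleft n \coloneqq [x, mn]$ to define an $M$-action on $Y_X$ that realizes $(Y_X, \triangleleft)$ as the coequalizer in $\mod{M}$.

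The core step is to verify condition \ref{item:glob3}: the square \eqref{eq:GXglob} must be a pullback in $\Set$. Writing $\epsilon_X \coloneqq \kappa \circ (X \times u) : x \mapsto [x, e]$, this amounts to showing that the canonical comparison map
$$\Phi : X \bul M \longrightarrow \{((x,m), y) \in (X \times M) \times X \mid [x, m] = [y, e]\}, \qquad (x, m) \longmapsto ((x, m), \rho_X(x, m)),$$
is a bijection. Injectivity is immediate since $\pi_X$ is injective. One half of surjectivity is also easy: if $(x, m) \in X \bul M$, specializing the generating relation of $R$ at $n = e$ yields $[x, m] = [x \cdot m, e]$, so $((x, m), x\cdot m)$ lies in the image of $\Phi$.

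The reverse direction is the main obstacle: given $[x, m] = [y, e]$ in $Y_X$, one must conclude that $(x, m) \in X \bul M$ and $y = \rho_X(x, m)$. I would proceed by induction on the length of a zigzag in $R$ joining $(x, m)$ to $(y, e)$, using the partial associativity axiom \ref{item:QPC2} at each elementary step to propagate membership in $X \bul M$ across the generating relation and to match the corresponding values of $\rho_X$; the identities $\alpha_{m_1}^{-1}(X_{m_2}) = X_{m_1 m_2} \cap X_{m_1}$ and $\alpha_{m_2} \circ \alpha_{m_1} = \alpha_{m_1 m_2}$ are precisely what is needed to close each inductive step. This is essentially the content of \cite[Proposition 2.6]{Megrelishvili}. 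Once \ref{item:glob3} is secured, Theorem~\ref{thm:globalization} produces the globalization $(Y_X, \epsilon_X)$ of the arbitrary partial module $X$, whence $\gpmod{M}^{gl} = \gpmod{M}$.
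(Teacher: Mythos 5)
Your proposal is correct and follows essentially the same route as the paper: existence of the coequalizer in $\Set$, preservation by the left adjoint $-\times M$ to obtain condition \ref{item:glob1}, and reduction of condition \ref{item:glob3} to the pullback property of \eqref{eq:GXglob}, which the paper likewise attributes to \cite[Proposition 2.6]{Megrelishvili}. The only difference is that you sketch the zigzag induction behind that cited pullback verification, which the paper leaves entirely to the reference.
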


Let us remark that if the monoid $M$ is a group, then the globalization $Y_X$ coincides with the globalization for partial group actions as given in \cite[Theorem 1.1]{Abadie} or \cite[\S3.1]{KellendorkLawason}, as already discussed in \cite[Proposition 3.4]{Saracco-Vercruysse}. 

\section{Partial actions of topological monoids}\label{ssec:top}

Consider the category $\Top$
of topological spaces. It is a monoidal, complete and cocomplete category (see \eg \cite[Chapter V, \S9]{MacLane}). 
Limits and colimits can be computed by endowing the corresponding limits and colimits in $\Set$ with a suitable topology.  
%
A monoid in $\Top$ is a topological monoid $((M,\top_M),\Delta,u)$, i.e.\ a topological space endowed with a monoid structure whose composition 
is a continuous map. The notion of geometric partial module inflected in $\Top$ gives a span
\[
\xymatrix @!0 @C=75pt @R=25pt {
(X\times M,\top_X \times \top_M) & & (X,\top_X) \\
 & (X\bul M,\top_{X \bul M}) \ar@{ >->}[ul]^-{\pi_X} \ar[ur]_-{\rho_X} & 
}
\]
in $\Top$, where $\pi_X$ is an injective continuous map and \ref{item:QPC1} and \ref{item:QPC2} hold. 
The following example shows that, in contrast to what we saw in the previous section for $\Set$, not every geometric partial module over a topological monoid is globalizable.

\begin{example}\label{ex:countertop}
Let $(M,\Delta,u)$ be a topological monoid
and $X$ a set with at least two elements, which we endow with the indiscrete topology. Consider the trivial global action $\delta_X:X\times M\to X$, given by $\delta_X(x,m)=x$ for all $x\in X$ and $m\in M$. 
Now we define the topological space $X\bul M$ as the set $X\times M$ endowed with the product topology of the discrete topology $\top_X^{\mf{d}}$ on $X$ and the given topology on $M$. In other words, the topology on $X\bul M$ is generated by open sets of the form $\{x\}\times U$ where $x\in X$ and $U\in \top_M$.
Then $(X,X\bul M,\id_{X\times M},\delta_X)$ is clearly a partial $M$-module datum in $\Top$. Since the action is global, both $(X\bul M)\bul M$ and $X\bul (M\bul M)$ have $X\times M\times M$ as underlying set. On $(X\bul M)\times M$ we have the product topology arising from the discrete topology on $X$ and the given topology on $M$ and since 
\[
\begin{gathered}
\delta_X \bul M = \big(X \times M \times M, \top_X^{\mf{d}} \times \top_M \times \top_M\big) \xrightarrow{\delta_X \times M} \big(X \times M, \top_X^{\mf{d}} \times \top_M\big) \quad\text{and} \\
X\bul \Delta = \big(X \times M \times M, \top_X^{\mf{d}} \times \top_M \times \top_M\big) \xrightarrow{X \times \Delta} \big(X \times M, \top_X^{\mf{d}} \times \top_M\big)
\end{gathered}
\] 
are already continuous, we find that $(X\bul M)\bul M= X\bul (M\bul M)$ have the same topology as $(X\bul M)\times M$ and hence $X$ is a geometric partial $M$-module.

The underlying set of the coequalizer \eqref{eq:coeq} in $\Top$ is $X$ (because the original action was global) endowed with the quotient topology along $X \times M \xrightarrow{\delta_X} X$, which is the original indiscrete topology on $X$. 
Moreover, if we endow this coequalizer $Y_X$ with the initial global action of $X$, one easily observes that $Y_X$ is also coequalizer \eqref{eq:glob} in the category of (global) topological $M$-modules $\mod{M}$. Hence condition \ref{item:glob1} of Theorem \ref{thm:globalization} is satisfied and therefore $(X,X\bul M,\id_{X\times M},\delta_X)$
is globalizable (with globalization the global $M$-module $(Y_X,\delta_X)$) if and only if \eqref{eq:GXglob} is a pullback in $\Top$. Specifying \eqref{eq:GXglob} in our present setting, we obtain the following diagram
\[
\xymatrix @!0 @C=55pt @R=18pt{
 & Y_X 
 & \\
X\times M 
\ar[ur]^-{\delta_X} & & X 
\ar[ul]_-{\id_X} \\
 & X\bul M 
 \ar[ul]^-{\id_{X \times M}} \ar[ur]_-{\delta_X} & 
}
\]
In order for this diagram to be a pullback in $\Top$, $X\bul M$ should have the coarsest topology for which $\id_{X\times M}$ and $\delta_X$ are continuous, which means that the opens should be of the form $X\times U$, with $U\subset M$ open. This clearly differs from the topology we have chosen on $X\bul M$. 
Therefore, the diagram above is not a pullback and hence the geometric partial module $(X,X\bul M,\id_{X\times M},\delta_X)$ is not globalizable.

\end{example}

As a consequence, we can state the following result.

\begin{proposition}
In the category $\Top$, a general globalization theorem for geometric partial modules does not exist. 
More precisely, $\gpmod{M}^{gl} \subsetneq \gpmod{M}$ for any topological monoid $M$.
\end{proposition}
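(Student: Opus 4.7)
The plan is to derive the proposition as an immediate consequence of Example \ref{ex:countertop}. The inclusion $\gpmod{M}^{gl}\subseteq\gpmod{M}$ is tautological from Definition \ref{def:glob}, so only the strictness needs justification; the non-existence of a general globalization theorem in $\Top$ is then a formal consequence of the strict containment, since no uniform construction could produce a globalization for objects that provably admit none.

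For the strict inclusion, I would invoke the example verbatim. Given an arbitrary topological monoid $M$, I pick any set $X$ with at least two elements (e.g.\ $X=\{0,1\}$), endow it with the indiscrete topology, and consider the geometric partial $M$-module datum $(X,X\bul M,\id_{X\times M},\delta_X)$ constructed there, where $\delta_X$ is the trivial global action and $X\bul M=X\times M$ carries the product topology $\top_X^{\mf{d}}\times\top_M$. The example does two things. First, it checks that this datum really is a geometric partial $M$-module and that condition \ref{item:glob1} of Theorem \ref{thm:globalization} holds (the coequalizer \eqref{eq:coeq} computed in $\Top$ is $(X,\delta_X)$ with the indiscrete topology, and the same object carries the coequalizer structure in $\mod{M}$). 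Second, it shows that the topology put on $X\bul M$ is strictly finer than the initial topology determined by $\id_{X\times M}$ and $\delta_X$, so the square \eqref{eq:GXglob} fails to be a pullback in $\Top$, i.e.\ condition \ref{item:glob3} fails.

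Since this construction goes through for \emph{every} topological monoid $M$, the strict containment $\gpmod{M}^{gl}\subsetneq\gpmod{M}$ is established uniformly in $M$, and the proposition follows. I do not foresee any hidden obstacle: the only non-trivial content is the comparison of the two topologies on $X\bul M$, which is dispatched in the example by exhibiting a basic open of the form $\{x\}\times U$ that is not of the form $X\times V$ — a fact that holds as soon as $|X|\geq 2$, a condition we can always arrange.
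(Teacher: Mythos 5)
Your proposal is correct and follows exactly the paper's route: the paper's proof consists of the single line ``It follows directly from Example \ref{ex:countertop}'', and your write-up simply unpacks that example's two steps (condition \ref{item:glob1} holds, condition \ref{item:glob3} fails because the chosen topology on $X\bul M$ is strictly finer than the initial one) and notes that the construction applies uniformly to every topological monoid $M$.
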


\begin{proof}
It follows directly from Example \ref{ex:countertop}. 
\end{proof}

Although we know that not all geometric partial modules in $\Top$ are globalizable, Theorem \ref{thm:globalization} provides for us a way to describe globalizable partial modules over nice classes of topological monoids.

\begin{theorem}\label{thm:topmain}
Let $(M,\top_M)$ be a topological monoid for which the category of global $M$-modules in $\Top$ admits coequalizers (of type \eqref{eq:glob}) and the underlying functor to $\Top$ preserves them.

Then the globalizable geometric partial modules over $M$ in $\Top$ are exactly all those geometric partial modules $(X,X\bul M, \pi_X,\rho_X)$ for which $X \bul M$ has the coarsest topology making both $\pi_X$ and $\rho_X$ continuous. In this case, the globalization is given by the coequalizer \eqref{eq:coeq}.

In particular, any geometric partial module for which $\pi_X$ is an embedding (i.e., $X \bul M$ has the induced topology via $\pi_X$), is globalizable.
\end{theorem}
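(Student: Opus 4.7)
The plan is to apply Theorem \ref{thm:globalization} to the monoidal category $\cC = \Top$, and to check the two conditions \ref{item:glob1} and \ref{item:glob3} using the hypothesis on $M$ together with what is already known at the level of underlying sets from Corollary \ref{cor:Set}.

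First I would handle \ref{item:glob1}. The assumption on $M$ is tailored for this: the coequalizer \eqref{eq:glob} exists in $\mod{M}$ and the forgetful functor $\mod{M} \to \Top$ preserves it. Hence the underlying topological space of the coequalizer in $\mod{M}$ is the coequalizer in $\Top$ of the same parallel pair of continuous maps, and by computing colimits in $\Top$ on the level of sets and endowing with the quotient topology this is nothing but $Y_X = (X\times M)/R$ with the quotient topology, i.e.\ the coequalizer \eqref{eq:coeq}. So \ref{item:glob1} is automatic under our hypothesis.

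Next I would analyze \ref{item:glob3}. By Corollary \ref{cor:Set}, the diagram \eqref{eq:GXglob} is a pullback of the underlying sets (the globalization theorem already applies in $\Set$). Consequently the canonical comparison morphism $X\bul M \to (X\times M)\times_{Y_X} X$ is a continuous bijection, and the question reduces to whether it is a homeomorphism. But the pullback in $\Top$ carries the subspace topology from $(X\times M)\times X$, which is exactly the initial topology with respect to the two projections, i.e., with respect to $\pi_X$ and $\rho_X$. Thus the comparison is a homeomorphism if and only if the topology on $X\bul M$ coincides with the coarsest one for which both $\pi_X$ and $\rho_X$ are continuous. Combining this with the discussion of \ref{item:glob1} yields the equivalence stated in the theorem and identifies the globalization with the coequalizer \eqref{eq:coeq}.

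For the final clause, suppose $\pi_X$ is an embedding, so that $X\bul M$ already carries the initial topology with respect to $\pi_X$ alone. Since $\rho_X$ is continuous by definition of a partial module datum, adding $\rho_X$ to the family does not enlarge the initial topology: the topology on $X\bul M$ is then automatically the coarsest making both $\pi_X$ and $\rho_X$ continuous, and the main assertion applies. The only point that requires care — and which I would state explicitly in the write-up — is the identification of the pullback topology in $\Top$ with the initial topology with respect to the projections; once that is in place, the rest is a direct translation of Theorem \ref{thm:globalization} to the topological setting.
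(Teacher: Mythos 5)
Your proposal is correct and follows essentially the same route as the paper's proof: reduce to the set-theoretic case via Corollary \ref{cor:Set}, use the hypothesis on $M$ to identify the coequalizer in $\mod{M}$ with the one computed in $\Top$ (and hence in $\Set$), and observe that the remaining pullback condition \eqref{eq:GXglob} holds in $\Top$ precisely when $X\bul M$ carries the initial topology with respect to $\pi_X$ and $\rho_X$. Your explicit identification of the comparison map with a continuous bijection onto the pullback equipped with the initial topology is exactly the point the paper treats as ``clear,'' and the argument for the final clause (that continuity of $\rho_X$ makes the initial topology for $\{\pi_X,\rho_X\}$ coincide with that for $\pi_X$ alone) matches the paper's.
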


\begin{proof}
Since $\Top$ has a faithful forgetful functor to $\Set$ which preserves limits (so, in particular, pullbacks), any geometric partial module in $\Top$ over a topological monoid $M$ is also a geometric partial module in $\Set$ over the underlying monoid of $M$. From Corollary \ref{cor:Set}, we know that the latter is globalizable. 

Furthermore, by Theorem \ref{thm:globalization}, we know that whenever a globalization of a geometric partial module $X$ in $\Top$ exists, this globalization can be computed by means of the coequalizer \eqref{eq:glob} in the category of global $M$-modules. Under the assumption of the theorem, this coequalizer can already be computed in $\Top$, and as the forgetful functor $\Top\to\Set$ also preserves colimits (in particular, coequalizers), the underlying (set-theoretical) $M$-module of the (topological) globalization of $X$ coincides with the set-theoretical globalization of $X$. Therefore, the sole criterion for the globalization of $X$ to exist, is that the set-theoretical pullback \eqref{eq:GXglob} is also a pullback in $\Top$. Clearly, this is the case exactly if the topology on $X \bul M$ is the coarsest topology making both $\pi_X$ and $\rho_X$ continuous.

Finally, if $(X,X\bul M, \pi_X,\rho_X)$ is a geometric partial module for which $\pi_X$ is an embedding, then this means that $X\bul M$ has simply the induced (or subspace) topology from $X\times M$ and that $\rho_X$ is already continuous with respect to this topology. Hence $X\bul M$ has indeed the coarsest topology for which both $\pi_X$ and $\rho_X$ are continuous and we can conclude.
\end{proof}

The following Lemma shows that the condition on the topological monoid in Theorem \ref{thm:topmain} is satisfied in some very relevant cases.

\begin{lemma}\label{lem:niceM}
If $(M,\top_M)$ is a topological monoid such that one of the following conditions holds:
\begin{enumerate}[label=(\arabic*),ref=(\arabic*),leftmargin=0.7cm]
\item\label{item:niceM1} The endofunctor $- \times M \times M:\Top\to\Top$ preserves coequalizers;
\item\label{item:niceM2} $M$ is core-compact;
\item\label{item:niceM3} $(M,\top_M)$ is a topological group;
\end{enumerate}
then the category of global $M$-modules in $\Top$ admits coequalizers and the underlying functor to $\Top$ preserves them.
\end{lemma}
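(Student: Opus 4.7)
The plan is to apply the standard construction of coequalizers in the category of algebras for the monad $T = -\times M$ on $\Top$. Given a parallel pair $f, g \colon (X, \alpha_X) \rightrightarrows (Y, \alpha_Y)$ in $\mod{M}$ of the form in \eqref{eq:glob}, first form the coequalizer $q \colon Y \to Q$ in $\Top$. The composite $q \circ \alpha_Y \colon Y \times M \to Q$ coequalizes $f \times M$ and $g \times M$ by $M$-equivariance, so, provided $q \times M$ is itself a coequalizer in $\Top$, it factors uniquely as $\alpha_Q \circ (q \times M)$ for a continuous map $\alpha_Q \colon Q \times M \to Q$. Unitality of $\alpha_Q$ then follows by epi-cancellation against $q$, and associativity by epi-cancellation against $q \times M \times M$, provided that the latter is also a coequalizer in $\Top$. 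Altogether, $(Q, \alpha_Q)$ realises the coequalizer in $\mod{M}$, and the forgetful functor preserves it by construction.

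A useful preliminary observation is that the parallel pair in \eqref{eq:glob} is reflexive: the morphism $X\bul u \otimes M \colon X \otimes M \to X\bul M \otimes M$, obtained from the unit $X\bul u$ provided by axiom \ref{item:QPC1}, is a common section of both parallel maps, as one checks using diagram \eqref{eq:counitstrict} together with the left unitality of $\Delta$. Hence it suffices, in each of the three cases, to establish the preservation of this specific reflexive coequalizer by $- \times M$ and $- \times M \times M$. Under hypothesis \ref{item:niceM1} the preservation by $- \times M \times M$ is exactly what is assumed, and the corresponding statement for $- \times M$ is extracted by a diagram chase exploiting the common section together with the unit of $M$ (which exhibits $- \times M$ as a retract of $- \times M \times M$ in the relevant sense). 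Under hypothesis \ref{item:niceM2}, the Day--Kelly characterisation of exponentiable objects identifies core-compact spaces as exactly those $Z$ for which $- \times Z$ admits a right adjoint (the exponential $[Z, -]$); thus $- \times M$ preserves all colimits, and the same holds for $- \times M \times M$ since $M \times M$ is again core-compact.

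Under hypothesis \ref{item:niceM3}, where $M$ is a topological group, the argument I would use relies on the twist homeomorphism $\phi_Y \colon Y \times M \to Y \times M$, $(y, m) \mapsto (y \cdot m, m)$, whose inverse $(y, m) \mapsto (y \cdot m^{-1}, m)$ is continuous thanks to the continuity of inversion; this homeomorphism is natural in $M$-equivariant maps and conjugates the action $\alpha_Y$ to the first projection $\pi_1$. I expect this case to be the main technical obstacle, because the formal identity must be upgraded to a genuine topological preservation statement. The cleanest route is to show directly that the coequalizer $q$ of $M$-equivariant maps is an \emph{open} quotient map, so that $q \times M$ and $q \times M \times M$ automatically remain quotient maps (since the product of an open map with an identity is open, hence quotient); the openness of $q$ is then extracted from the $M$-invariance of the equivalence relation generated by $f, g$, combined with the fact that right translation by any element of $M$ is a homeomorphism. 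Alternatively, one exploits the twist to reduce the question of preservation of the coequalizer by $- \times M$ to the analogous preservation by $\pi_1$, which is immediate.
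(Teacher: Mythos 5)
Your overall strategy---form the coequalizer $Q$ in $\Top$, lift the action along $q$ once $q\times M$ and $q\times M\times M$ are known to remain coequalizers, and check unitality and associativity by epi-cancellation---is exactly the ``well-known fact'' that the paper invokes for case \ref{item:niceM1}, and your treatment of cases \ref{item:niceM1} and \ref{item:niceM2} matches the paper's: core-compactness is used (via Escard\'o--Heckmann, equivalently Day--Kelly) to make $-\times M$ a left adjoint, reducing \ref{item:niceM2} to \ref{item:niceM1}. Your two supplementary observations are correct and actually make explicit what the paper leaves implicit: the pair in \eqref{eq:glob} is reflexive, with common section $(X\bul u)\times M$ by \eqref{eq:counitstrict} and left unitality of $\Delta$, and preservation by $-\times M$ does follow from preservation by $-\times M\times M$ because the former is a natural retract of the latter (insert the unit, multiply back), so the comparison map into the colimit is a retract of an isomorphism.

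The gap is in case \ref{item:niceM3}. You correctly isolate the crux---one wants $q$ to be an \emph{open} quotient, so that $q\times M$ and $q\times M\times M$ stay quotient maps---but neither of your proposed derivations of openness goes through as stated. Invariance of the equivalence relation under right translation by $G$ on the second factor, plus translations being homeomorphisms, does not make saturations of open sets open: that argument is valid for the \emph{orbit} relation of a group acting by homeomorphisms, whereas the relation generated by the pair in \eqref{eq:coeq} is merely translation-invariant (already for $G$ trivial, an invariant relation can have a non-open quotient, even though preservation is then vacuous). The twist homeomorphism likewise conjugates a global action to a projection, but the coequalizer \eqref{eq:coeq} is not the coequalizer of an action against a projection, so the reduction is not immediate. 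The paper's route is different and more direct: it first records that a coequalizer of a parallel pair of \emph{open} continuous maps is an open quotient (the saturation of an open set is built by an alternating zigzag of preimages and images under open maps), and then observes that for $G$ a group the two maps $\rho_X\times G$ and $(X\times\Delta)\circ(\pi_X\times G)$ are open, since $\Delta$, $\pi_X$ and $\rho_X$ are. This is also why the paper restricts case \ref{item:niceM3} to coequalizers of type \eqref{eq:coeq} rather than to arbitrary coequalizers in $\mod{M}$---a restriction your general setup does not record. To repair your argument you would need to supply the openness of the specific parallel pair (or of $q$) by some such route rather than by invariance alone.
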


\begin{proof}
\underline{\ref{item:niceM1}}. This is an application of the well-known fact that for any monoid $M$ in a monoidal category $\cC$, the colimit of any given diagram in the category $\mod{M}$ exists whenever the colimit of the same diagram in $\cC$ exists and the functor $-\ot M\ot M : \cC \to \cC$ preserves it.\\
\underline{\ref{item:niceM2}}. This is a particular instance of \ref{item:niceM1}, since in this case $- \times M$ is a left adjoint functor (see, for instance, \cite[Theorem 5.3]{Escardo}).\\
\underline{\ref{item:niceM3}}. Also this is a particular instance of \ref{item:niceM1}, but specified to the coequalizers of type \eqref{eq:coeq} (which is sufficient for Theorem \ref{thm:topmain} to hold). Recall that the endofunctor $- \times Z$ preserves coequalizers $(Q,q)$ of open maps in $\Top$, for every $Z$ in $\Top$: in fact, being the coequalizer $(Q,q)$ of open maps open itself, 
$q \times Z$ is open, 
surjective and continuous and hence it is a quotient map (that is, the product topology on $Q \times Z$ is equivalent to the quotient topology).
Now, being $G$ a group, the maps $\Delta:G \times G \to G$, $\pi_X$ and $\rho_X$ are all open maps. 
Since products and coequalizers of open maps are open again and the endofunctor $- \times G$ preserves coequalizers of open maps, $- \times G$ always preserves the coequalizer \eqref{eq:coeq} in $\Top$.
\end{proof}


\noindent The following definition subsumes at the same time \cite[Definition 1.1]{Abadie} (in case $((M,\top_M),\Delta,u)$ is a topological group) and \cite[page 125]{Megrelishvili} (in case $M$ is discrete).


\begin{definition}\label{def:TopParAct}
A \emph{topological partial (right) action} of a topological monoid $(M,\top_M)$ on a topological space $(X,\top_X)$ is a pair $\left(\left\{X_m\right\}_{m\in M},\left\{\alpha_m\right\}_{m\in M}\right)$ such that
\begin{enumerate}[label = (TP\arabic*),ref = (TP\arabic*),leftmargin=1.1cm]
\item\label{item:TPA2} the set $X\bul M =\left\{(x,m)\in X\times M\mid x\in X_m\right\}$ is an open subspace of $X\times M$ and the function $\rho_X : X \bul M \to X, (x,m)\mapsto \alpha_m(x)$ is continuous; 
\item\label{item:TPA3} the pair forms a set-theoretical partial action of $M$: $X_e = X$, $\alpha_e = \id_X$ and for all $m,n\in M$, $\alpha_m^{-1}(X_n) = X_{mn}\cap X_m$ and $\alpha_n\circ\alpha_m = \alpha_{mn}$ on $\alpha_m^{-1}(X_n)$.
\end{enumerate}
A \emph{morphism of topological partial actions} from $(X,\top_X)$ with $\left(\left\{X_m\right\}_{m\in M},\left\{\alpha_m\right\}_{m\in M}\right)$ to $(X',\top_{X'})$ with $\left(\left\{X_m'\right\}_{m\in M},\left\{\alpha_m'\right\}_{m\in M}\right)$ is a continuous map $f : X \to X'$ such that $f(X_m)\subseteq X_m'$ and $f \circ \alpha_m = \alpha_m'\circ f$.
\end{definition}

Remark that axiom \ref{item:TPA2} implies that the set $X_m= \left\{x \in X\mid (x,m)\in X\bul M\right\}$ is an open subspace of $X$ and $\alpha_m:X_m\to X$ is a continuous map, for all $m\in M$. This was included as an additional axiom in \cite[Definition 1.1]{Abadie} and \cite[page 125]{Megrelishvili}. 

Any discrete partial action as in \S\ref{se:partact} is an example of a topological partial action in which every space has the discrete topology. On the other hand, not every geometric partial module in $\Top$ is a topological partial action. Indeed, the Example \ref{ex:countertop} is a geometric partial module in $\Top$ which is not a topological partial action. This can be seen easily from the following proposition, since in Example \ref{ex:countertop} $\pi_X:X\bul M\to X\times M$ is an injective continuous map, but not an open embedding, as $X\bul M$ does not have the induced topology along $\pi_X$.

\begin{proposition}\label{proposition:TPA}
Topological partial actions of a topological monoid $(M,\top_M)$ are exactly the geometric partial $M$-modules in ${\Top}$ for which $\pi_X$ is an open embedding.

Furthermore, the category $\ms{TopParAct}_M$ of topological partial actions of $M$ and their morphisms is a full subcategory of the category $\gpmod{M}$ of geometric partial $M$-modules in ${\Top}$.
\end{proposition}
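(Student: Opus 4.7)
The plan is to set up a dictionary translation between the two notions, bijective on objects and full on morphisms, with the topology of $X \bul M$ encoded as the subspace topology inherited via an open embedding $\pi_X$.

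Starting from a topological partial action $(\{X_m\}_{m \in M},\{\alpha_m\}_{m \in M})$ on $X$, I form $X \bul M := \{(x,m) \mid x \in X_m\}$ with its subspace topology from $X \times M$; axiom \ref{item:TPA2} makes $\pi_X$ an open embedding and $\rho_X(x,m) := \alpha_m(x)$ continuous. To verify the geometric partial module axioms I unravel the two pullbacks in $\Top$: since $\pi_X$ is an embedding (and embeddings are stable under products), both $(X\bul M)\bul M$ and $X\bul (M\bul M)$ are subspaces of $X \times M \times M$, with underlying sets given by the explicit formulas in the excerpt. Axiom \ref{item:TPA3} forces these two subsets to coincide, whence the required equality $\theta$ of \eqref{eq:geomcoass} holds at the level of topological spaces, and the identity $\alpha_n \circ \alpha_m = \alpha_{mn}$ on $\alpha_m^{-1}(X_n)$ supplies the commutativity of that pentagon. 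Unitality \eqref{eq:counitstrict} is witnessed by the continuous section $X \bul u \colon X \to X \bul M$, $x \mapsto (x,e)$, which lands in $X \bul M$ because $X_e = X$.

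Conversely, given a geometric partial $M$-module for which $\pi_X$ is an open embedding, I identify $X \bul M$ with the open subset $\pi_X(X \bul M) \subseteq X \times M$ and set $X_m := \{x \in X \mid (x,m) \in \pi_X(X \bul M)\}$ and $\alpha_m(x) := \rho_X(x,m)$. Continuity of the map $x \mapsto (x,m)$ from $X$ to $X \times M$ and openness of $\pi_X(X\bul M)$ force $X_m$ to be open in $X$, and $\alpha_m$ is continuous as a restriction of $\rho_X$. Axiom \ref{item:QPC1} gives $X_e = X$ and $\alpha_e = \id_X$, while \ref{item:QPC2} translates verbatim into the compatibility required by \ref{item:TPA3}. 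These two assignments are mutually inverse, yielding the bijection on objects.

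For the fullness of the inclusion $\ms{TopParAct}_M \hookrightarrow \gpmod{M}$, the key observation is that $\pi_{X'}$ is a monomorphism. Hence, given a continuous $f \colon X \to X'$ satisfying $f(X_m) \subseteq X'_m$ and $f \circ \alpha_m = \alpha'_m \circ f$, there is a unique map $f \bul M \colon X \bul M \to X' \bul M$ making the right-hand square of \eqref{eq:shield} commute, namely the (co)restriction of $f \times M$; it is automatically continuous, and the left-hand square then reduces to the relation $f \circ \alpha_m = \alpha'_m \circ f$. Conversely, any $\gpmod{M}$-morphism $(f,f\bul M)$ forces $f$ to satisfy both conditions. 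The only delicate point, which is not really an obstacle, is the verification that the pullbacks computed in $\Top$ genuinely carry the subspace topology asserted above; this follows from the fact that $\pi_X$ is an embedding and that embeddings are preserved by products and by pullback along continuous maps.
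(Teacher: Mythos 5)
Your proof is correct, and its overall architecture matches the paper's: reduce the object-level statement to the set-theoretic equivalence of \ref{item:TPA3} with \ref{item:QPC1}--\ref{item:QPC2}, observe that \ref{item:TPA2} is exactly continuity of $\rho_X$ plus $\pi_X$ being an open embedding, and then handle the one genuinely topological point, namely that the set-level identification $\theta\colon (X\bul M)\bul M\to X\bul(M\bul M)$ is a homeomorphism; the treatment of morphisms is the same in both arguments. Where you diverge is in how you settle that one point. The paper argues via the universal property of the pullback: it uses the identity $X_m\cap X_{mn}=\alpha_m^{-1}(X_n)$ to show that $(\rho_X\times M)\circ\pi_{X,\Delta}$ factors continuously through $(X\bul M,\pi_X)$ (because the latter carries the subspace topology), which produces the continuous comparison map in one direction, and then repeats the argument symmetrically. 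You instead invoke the stability of (open) embeddings under finite products and under pullback along continuous maps to conclude that both iterated pullbacks are literally subspaces of $X\times M\times M$; since \ref{item:TPA3} makes their underlying subsets equal, they coincide as spaces and $\theta$ is the identity. Your route is slightly more conceptual and makes the commutativity of \eqref{eq:geomcoass} automatic by faithfulness of the forgetful functor to $\Set$, at the cost of citing the two stability lemmas (both standard, and both worth stating explicitly if you write this up); the paper's route is more hands-on and keeps everything inside the universal properties already displayed in \eqref{eq:pushcan2}. Both are complete proofs.
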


\begin{proof}

As explained in \S\ref{se:partact}, axiom \ref{item:TPA3} tells exactly that $(X,X\bul M,\pi_X,\rho_X)$ is a geometric partial module over $M$ in $\Set$. Furthermore, axiom \ref{item:TPA2} tells that $\pi_X$ and $\rho_X$ are morphisms in $\Top$ and  $\pi_X$ is an open embedding. Hence we can conclude on the first assertion of the theorem if we prove that the bijection 
$\theta:(X \bul M) \bul M \to X \bul (M \bul M)$ (arising from the fact that $(X,X\bul M,\pi_X,\rho_X)$ is a geometric partial module in $\Set$) is an homeomorphism.
The condition $X_m \cap X_{mn} = \alpha_m^{-1}(X_n)$ implies that the continuous map
\[
X \bul (M \bul M) \xrightarrow{\pi_{X,\Delta}} X \bul M \times M \xrightarrow{\rho_X \times M} X \times M
\]
factors through $(X \bul M, \pi_X)$ and since the latter has the induced topology from $X\times M$, the resulting factorization is continuous. Such a factorization is exactly the map needed to prove that the inclusion $X \bul (M \bul M) \to (X \bul M) \bul M$ is continuous by resorting to the universal property of $(X \bul M) \bul M$ as a pullback in $\Top$. The other way around, the argument is similar. 
%
%

Now, any 
morphism $f:X\to X'$ of topological partial actions induces a function $f \bul M : X \bul M \to X'\bul M$ by (co)restriction of $f \times M$, which is continuous and makes \eqref{eq:shield} to commute. Moreover, if $(f,f\bul M)$ is a morphism of partial $M$-modules which were induced by topological partial actions, then the condition $(f \times M) \circ \pi_X = \pi_{X'} \circ (f \bul M)$ entails that for every $x \in X_m$, we have $f(x) \in X'_m$, and the condition $f \circ \rho_{X} = \rho_{X'} \circ (f \bul M)$ entails that for every $x \in X_m$, we have $f(\alpha_m(x)) = \alpha'_m(f(x))$. Therefore, $f$ is a morphism of topological partial actions. One easily verifies that this construction is functorial. 
\end{proof}

By combining Proposition \ref{proposition:TPA} with Theorem \ref{thm:topmain}, we immediately can conclude that all topological partial actions over a `nice' topological monoid are globalizable.

\begin{corollary}\label{topactisglob}
Let $M$ be a topological monoid as in Theorem \ref{thm:topmain} (e.g. $M$ satisfies one of the conditions from Lemma \ref{lem:niceM}), then every topological partial action over $M$ is globalizable and the globalization is given by the coequalizer \eqref{eq:coeq}. Hence $\ms{TopParAct}_M$ is a full subcategory of $\gpmod{M}^{gl}$.\end{corollary}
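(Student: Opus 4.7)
The plan is to deduce the corollary as a direct combination of Proposition~\ref{proposition:TPA} and Theorem~\ref{thm:topmain}, using no further machinery. The key observation is that the hypotheses of the corollary have been engineered so that the two results dovetail: Proposition~\ref{proposition:TPA} tells us that topological partial actions embed into $\gpmod{M}$ as those geometric partial modules whose structural monomorphism $\pi_X$ is an \emph{open} embedding, while the second (``in particular'') clause of Theorem~\ref{thm:topmain} guarantees globalizability precisely when $\pi_X$ is an embedding.

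First, I would fix a topological monoid $M$ satisfying the hypothesis of Theorem~\ref{thm:topmain} (for example, one of the three conditions in Lemma~\ref{lem:niceM}) and take a topological partial action $(X,\{X_m\}_{m \in M},\{\alpha_m\}_{m \in M})$ on $(X,\top_X)$. By Proposition~\ref{proposition:TPA}, this is the same datum as a geometric partial $M$-module $(X,X\bul M,\pi_X,\rho_X)$ in $\Top$ with $\pi_X$ an open embedding. In particular, $\pi_X$ is an embedding, so the last sentence of Theorem~\ref{thm:topmain} directly yields that $(X,X\bul M,\pi_X,\rho_X)$ is globalizable and that its globalization is computed as the coequalizer \eqref{eq:coeq}, which under the standing hypothesis on $M$ is preserved by the forgetful functor $\mod{M} \to \Top$.

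Finally, for the full subcategory statement, I would observe that Proposition~\ref{proposition:TPA} already gives the fully faithful inclusion $\ms{TopParAct}_M \hookrightarrow \gpmod{M}$, and what we just proved says that the image of this inclusion lies in $\gpmod{M}^{gl}$; since $\gpmod{M}^{gl}$ is itself a full subcategory of $\gpmod{M}$ by definition, the restricted inclusion $\ms{TopParAct}_M \hookrightarrow \gpmod{M}^{gl}$ is automatically full and faithful. No genuine obstacle arises: the only step that required real work—verifying that the coequalizer \eqref{eq:coeq} computed in $\Top$ lifts to $\mod{M}$ and that diagram \eqref{eq:GXglob} is a pullback in $\Top$—is already absorbed into Theorem~\ref{thm:topmain} (via the hypothesis on $M$) and into the open-embedding property of $\pi_X$ (via Proposition~\ref{proposition:TPA}).
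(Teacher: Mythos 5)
Your proposal is correct and follows exactly the paper's own route: the corollary is obtained by combining Proposition~\ref{proposition:TPA} (topological partial actions are the geometric partial modules with $\pi_X$ an open embedding) with the ``in particular'' clause of Theorem~\ref{thm:topmain}. The paper states this as an immediate consequence without further argument, and your elaboration of the fullness of the inclusion is consistent with that.
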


Let us remark that in the framework of topological partial actions of topological groups, Corollary \ref{topactisglob} was also proven in \cite[Proposition 5.5]{ExelBig} and \cite[Theorem 1.1]{Abadie}. In the framework of topological partial actions of a (discrete) monoid on a topological space, an analogous of this result has been established in \cite[\S3]{Megrelishvili}.

In general, however, the inclusion of $\ms{TopParAct}_M$ in $\gpmod{M}^{gl}$ in Corollary \ref{topactisglob} is not essentially surjective on objects, in the sense that there exist globalizable partial modules which are not coming from topological partial actions. 

\begin{example}\label{ggpmnotparact}
Take $M = \R$ acting on $Y = \R^2$ by vertical translation $Y \times M \to Y, \big((x,y),v\big) \mapsto (x,y+v)$ and take $X$ to be the subspace $j: \R \to \R^2, x \mapsto (x,0),$ (everything with Euclidean topology). Then $X \bul M = \R \times \{0\}$, which is not open in $\R \times \R$. However, being $(X,X\bul M,\pi_X,\rho_X)$ the trivial partial module, it is globalizable with globalization $(X \times M, X \times \Delta)$ (see \cite[Proposition 3.10]{Saracco-Vercruysse}).
\end{example}

The next proposition further explains this phenomenon and how topological partial actions can be characterized by the way they embed in their globalization.




\begin{proposition}\label{globparact}
Let $M$ be a topological monoid.

If $(Y,\delta)$ is a global $M$-module in $\Top$ and $\epsilon:X\to Y$ is an open embedding,
then the induced geometric partial $M$-module $(X,X\bul M,\pi_X,\rho_X)$ in $\Top$ obtained by restricting $Y$ along $\epsilon$ is a topological partial action of $M$.

Conversely, if $X$ is a topological partial action of $M$ which is globalizable (as geometric partial module) with globalization $Y_X$, then the monomorphism $\epsilon_X:X\to Y_X$ is an open embedding.
\end{proposition}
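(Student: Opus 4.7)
The plan for the forward implication is a direct verification. Specialising the pullback \eqref{eq:globcom} to $p = \epsilon$, the induced partial module has underlying set $X \bul M = \{(x,m) \in X \times M \mid \delta(\epsilon(x), m) \in \epsilon(X)\}$ with the subspace topology from $X \times M$; moreover, $\pi_X$ is the inclusion and $\rho_X$ is the unique factorisation of $\delta \circ (\epsilon \times M)$ through $\epsilon$. Since $\epsilon$ is an open embedding, $\epsilon(X)$ is open in $Y$, hence $X \bul M$ is open in $X \times M$, being the preimage of $\epsilon(X)$ under the continuous map $\delta \circ (\epsilon \times M)$. This is axiom \ref{item:TPA2}; the set-theoretic axiom \ref{item:TPA3} is automatic because the partial action is obtained by restricting the global one. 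Proposition \ref{proposition:TPA} then does the rest.

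For the converse, the strategy is to leverage the fact that \eqref{eq:GXglob} is a pullback in $\Top$ (by Theorem \ref{thm:globalization}). Using that $\epsilon_X$ is injective, this identifies $\pi_X: X \bul M \hookrightarrow X \times M$ with the subspace inclusion $\kappa^{-1}(\epsilon_X(X)) \hookrightarrow X \times M$, where $\kappa: X \times M \to Y_X$ is the quotient map given by the coequalizer \eqref{eq:coeq}. Since $X$ is a topological partial action, $X \bul M$ is open in $X \times M$, so $\kappa^{-1}(\epsilon_X(X))$ is open, and the quotient topology on $Y_X$ forces $\epsilon_X(X)$ to be open in $Y_X$.

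It remains to show that $\epsilon_X: X \to \epsilon_X(X)$ is a homeomorphism. The plan is to play two quotient maps against each other: the restriction $\kappa|_{X \bul M}: X \bul M \to \epsilon_X(X)$ is a quotient map because $X \bul M$ is an open $\kappa$-saturated subset of $X \times M$; and $\rho_X: X \bul M \to X$ is a quotient map because the unit axiom \ref{item:QPC1} furnishes it with the continuous section $X \times u$. The commutativity of \eqref{eq:GXglob} yields the factorisation $\kappa|_{X \bul M} = \epsilon_X \circ \rho_X$. For any open $V \subseteq X$, one then computes $(\kappa|_{X \bul M})^{-1}(\epsilon_X(V)) = \rho_X^{-1}(V)$, which is open in $X \bul M$ by continuity of $\rho_X$; hence $\epsilon_X(V)$ is open in $\epsilon_X(X)$. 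Combined with the continuity and injectivity of $\epsilon_X$ (already known to be a monomorphism by Theorem \ref{thm:globalization}), this establishes the desired open embedding. The essential technical leverage throughout is the pullback-based identification $X \bul M \cong \kappa^{-1}(\epsilon_X(X))$, which drops out for free from the pullback property of \eqref{eq:GXglob}.
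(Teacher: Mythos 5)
Your proof is correct and follows essentially the same route as the paper's: the forward direction reads $X\bul M$ off the pullback \eqref{eq:globcom} as the preimage of the open set $\epsilon(X)$, and the converse uses the pullback \eqref{eq:GXglob} to identify $X\bul M$ with $\kappa^{-1}(\epsilon_X(X))$ and then deduces openness of $\epsilon_X(X)$ and of each $\epsilon_X(V)$ from the quotient topology on $Y_X$. The only cosmetic difference is that you obtain $\kappa^{-1}(\epsilon_X(V))=\pi_X\big(\rho_X^{-1}(V)\big)$ from the factorisation $\kappa|_{X\bul M}=\epsilon_X\circ\rho_X$ together with injectivity of $\epsilon_X$, whereas the paper computes the same preimage from the explicit generators of the equivalence relation $R$; both are sound.
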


\begin{proof}
Let $(Y,\delta)$ be a global topological $M$-module and $\epsilon:X\to Y$ be an open embedding. Then one can endow $X$ with a geometric partial comodule structure by taking the pullback \eqref{eq:globcom}. Hence we can identify $X\bul H=\delta^{-1}(\epsilon(X))\cap (X\times M)$. Since $\epsilon$ is an open embedding and $\delta$ is continuous, we can conclude that $X\bul H$ is an open subset of $X\times M$ and $\rho_X$ (which is the restriction of $\delta$) is continuous with respect to the subset topology on $X\bul H$.
In other words $\pi_X:X\bul M\to X\times M$ is an open embedding and hence $X$ is a topological partial action by Proposition\ref{proposition:TPA}. 

Suppose now that $X$ is a globalizable topological partial action with globalization $(Y_X,\delta)$. 
Then the topology $\top_{X \bul M}$ on $X \bul M$ has to be the limit topology and hence the coarsest topology for which $\pi_X$ and $\rho_X$ are continuous. Moreover, since $\big((Y_X,\top_Y),\kappa,\big)$ is (up to homeomorphism) the coequalizer in $\Top$ of
\[
\xymatrix @C=65pt{ 
\big(X \bul M \times M, \top_{X \bul M} \times\top_M \big) \ar@<+0.5ex>[r]^-{\rho_X \times M} \ar@<-0.5ex>[r]_-{(X \times \Delta)\circ (\pi_X \times M)} & \big(X \times M, \top_X \times \top_M\big),
 }
\]
we have that $\top_Y$ is the quotient topology with respect to $\kappa$, that is $U \in \top_Y$ if and only if $\kappa^{-1}(U) \in \top_X \times \top_M$, for every $U \subseteq Y_X$. In particular, $\epsilon_X(X) \in \top_Y$ if and only if $X \bul M = \kappa^{-1}\big(\epsilon_X(X)\big) \in \top_X \times \top_M$. Therefore, as $X \bul M$ is open in $X \times M$ (see Proposition \ref{proposition:TPA}), also $\epsilon_X(X)$ is open in $Y_X$ (and conversely). 

We are left to check that $\epsilon_X$ is an open map. For every $V \in \top_X$, we have that
\[
\kappa^{-1}\big(\epsilon_X(V)\big) = \big\{(x,m) \in X \times M \mid [x,m] = [y,e] \text{ for some } y\in V\big\}.
\]
However, by definition of the relation $R$ defining $Y_X$, the condition $[x,m] = [y,e]$ implies that $x\cdot m$ exists (i.e.~that $(x,m) \in X \bul M$) and $x \cdot m = y \in V$. 
Therefore, $\kappa^{-1}\big(\epsilon_X(V)\big) \subseteq \pi_X\big(\rho_X^{-1}(V)\big)$. Since also the inclusion in the opposite direction is true, we have that $\kappa^{-1}\big(\epsilon_X(V)\big) = \pi_X\big(\rho_X^{-1}(V)\big) \in \top_X \times \top_M$ and so $\epsilon_X(V) \in \top_Y$.
\end{proof}

\begin{corollary}
Let $M$ be a topological monoid as in Theorem \ref{thm:topmain} (e.g. $M$ satisfies one of the conditions from Lemma \ref{lem:niceM}). Then, topological partial actions over $M$ are exactly those globalizable geometric partial modules that embed in their globalization as open subspaces.
\end{corollary}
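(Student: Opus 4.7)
The plan is to derive the corollary by combining the two parts of Proposition \ref{globparact} together with Corollary \ref{topactisglob}, once we have matched the setups correctly.

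For the forward inclusion, suppose that $X$ is a topological partial action of $M$. By Corollary \ref{topactisglob}, our assumption on $M$ guarantees that $X$ is globalizable, with globalization $(Y_X, \epsilon_X)$ provided by the coequalizer \eqref{eq:coeq}. The second part of Proposition \ref{globparact} (applied exactly to this situation) then tells us that the canonical monomorphism $\epsilon_X : X \to Y_X$ is an open embedding. So $X$ embeds in its globalization as an open subspace, as required.

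For the reverse inclusion, suppose that $X$ is a globalizable geometric partial $M$-module in $\Top$ whose globalization $\epsilon_X : X \to Y_X$ is an open embedding. The key observation is that, by axiom \ref{item:GL0} of Definition \ref{def:glob}, the span $(X \bul M, \pi_X, \rho_X)$ coincides with the pullback \eqref{eq:globcom} computed from $(Y_X, \delta)$ along $\epsilon_X$; in other words, the given partial module structure on $X$ is precisely the one induced from the global module $Y_X$ along the open embedding $\epsilon_X$. The first part of Proposition \ref{globparact} then applies verbatim to $(Y_X, \delta)$ and $\epsilon_X$ and yields that $X$ is a topological partial action of $M$.

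Thus the only real content beyond the two auxiliary statements is checking that the partial structure on a globalizable $X$ agrees with the one induced by $\epsilon_X : X \to Y_X$; but this is exactly the pullback condition \ref{item:GL0} in the definition of globalization, so no further work is needed. The strongest hypothesis on $M$ (from Theorem \ref{thm:topmain} or Lemma \ref{lem:niceM}) is only used in the forward direction to guarantee the existence of the globalization via the coequalizer \eqref{eq:coeq}; the reverse direction works for arbitrary topological monoids.
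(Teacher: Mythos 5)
Your proposal is correct and follows exactly the route the paper intends: the corollary is stated without proof immediately after Proposition \ref{globparact}, being the direct combination of its two parts with Corollary \ref{topactisglob}, and your observation that axiom \ref{item:GL0} identifies the given partial structure with the one induced along $\epsilon_X$ is precisely the (small) missing link that makes the reverse inclusion an application of the first part of that proposition. Your remark that the hypothesis on $M$ is only needed for the forward direction is also accurate.
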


%

Let $G$ be a topological group, $X$ a topological partial action and $Y_X$ the globalization of $X$, which we know that exists from the above and which is constructed as a suitable quotient $G\times X/\sim$ (since it is the coequalizer \eqref{eq:coeq}). 
Abadie observed in \cite{Abadie} that, for $G$ a Hausdorff topological group and $X$ a Hausdorff topological partial action, this quotient is not necessarily Hausdorff in general. Consequently, this quotient is not (isomorphic to) the coequalizer $Y_X$ from \eqref{eq:coeq} in the category ${\Haus}$. 
However, since ${\Haus}$ is still complete and cocomplete (see \eg \cite[Proposition V.9.2]{MacLane}), one can still consider the coequalizer $(Y_X,\kappa)$ of
\begin{equation}\label{eq:coeqTop}
\xymatrix @C=35pt{
X\bul G \times G \ar@<+0.5ex>[rr]^-{\rho_X\times G} \ar@<-0.5ex>[rr]_-{(X\times \Delta)\circ(\pi_X\times G)} & & X\times G 
}
\end{equation}
in $\Haus$, which is the ``largest Hausdorff quotient'' of the coequalizer $(Y',\kappa')$ of the same pair of arrows, but computed in $\Top$. Namely, $Y_X \coloneqq Y'/\approx$ where $y \approx y'$ if and only if for every $f:Y' \to Q$ with $Q$ in $\Haus$ we have $f(y)=f(y')$ (i.e., they cannot be distinguished by maps to Hausdorff spaces). As a matter of notation, denote by $[x,m]$ the equivalence class of $(x,m)$ in $Y_X$, by $[x,m]'$ its class in $Y'$ and by $q:Y' \to Y_X$ the canonical projection. We have $q\circ \kappa' = \kappa$.

The following result tells that the globalization for Hausdorff partial actions exists exactly when the coequalizer of \eqref{eq:coeqTop} can be computed as in $\Top$. This should be compared to \cite[Proposition 1.2]{Abadie} and \cite[Proposition 5.6]{ExelBig}.

\begin{theorem}\label{thm:Haus}
Consider a geometric partial module $(X,X\bul G,\pi_X,\rho_X)$ in ${\Haus}$ where $G$ is a group. Then $X$ is globalizable with globalization $Y_X$ if and only if 
$Y_X=Y'$, that is, if and only if 
the coequalizer of \eqref{eq:coeqTop} in $\Top$ is a Hausdorff space.
\end{theorem}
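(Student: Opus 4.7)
The approach is to apply Theorem \ref{thm:globalization} in the category $\Haus$ and exploit the canonical quotient $q\colon Y'\to Y_X$ arising from the Hausdorff reflection.

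First, I would verify that the coequalizer condition \ref{item:glob1} of Theorem \ref{thm:globalization} is automatically satisfied in $\Haus$, with coequalizer $Y_X$. By Lemma \ref{lem:niceM}\ref{item:niceM3}, $Y'$ is the coequalizer of \eqref{eq:glob} in $\mod{G}$ computed in $\Top$. The relation $\approx$ defining the Hausdorffification is $G$-invariant (post-composing any continuous map into a Hausdorff space with the $G$-action gives again a continuous map into a Hausdorff space), so $Y_X$ inherits a continuous $G$-action, and by reflectivity it is the corresponding coequalizer in $\mod{G}$ computed in $\Haus$. Since pullbacks in $\Haus$ coincide with those in $\Top$ (closure of $\Haus$ under limits), Theorem \ref{thm:globalization} reduces the problem to checking whether \eqref{eq:GXglob} with $Y_X$ is a pullback.

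For the $(\Rightarrow)$ direction, assume the pullback condition holds; the plan is to show that $q$ is injective as a map of sets. Given $q([y_1,g_1]')=q([y_2,g_2]')$, the $G$-equivariance of $q$ (acting by $g_1^{-1}$) reduces us to $[y_1,e]=[y_2,g_2 g_1^{-1}]$ in $Y_X$, i.e.\ $\epsilon_X(y_1)=\kappa(y_2,g_2g_1^{-1})$. The pullback condition \eqref{eq:GXglob} then forces a unique preimage in $X\bul G$, which by $\pi_X$ must equal $(y_2,g_2g_1^{-1})$; hence $(y_2,g_2g_1^{-1})\in X\bul G$ with $y_2\cdot(g_2g_1^{-1})=y_1$. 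This relation is already imposed by $R$ in $Y'$, so $[y_1,e]'=[y_2,g_2g_1^{-1}]'$ in $Y'$, and translating back by $g_1$ yields $[y_1,g_1]'=[y_2,g_2]'$. Therefore $q$ is a continuous injection from $Y'$ into the Hausdorff space $Y_X$; since any space admitting a continuous injection into a Hausdorff space is itself Hausdorff, $Y'$ is Hausdorff, and by the universal property of the Hausdorff reflection $Y_X=Y'$.

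For the $(\Leftarrow)$ direction, when $Y'=Y_X$ the map $q$ is the identity, so the pullback in $\Haus$ to be checked is identical to the $\Top$-pullback of $X\xrightarrow{\epsilon_X'}Y'\xleftarrow{\kappa'}X\times G$; set-theoretically this pullback equals $X\bul G$ by Corollary \ref{cor:Set}, and the topological identification follows by the same reasoning as in the proof of Theorem \ref{thm:topmain}. The main obstacle lies in the forward direction: one must carefully orchestrate the $G$-equivariance of $q$ together with the pullback condition to extract the set-theoretic injectivity of $q$, after which the transfer of Hausdorffness from $Y_X$ to $Y'$ is essentially immediate.
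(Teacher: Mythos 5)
Your proposal is correct and follows essentially the same route as the paper: both reduce, via translation by a group element, to comparing a class $[y,e]$ with a class $[x,g]$ in $Y_X$, and then invoke the pullback property \eqref{eq:GXglob} of the globalization to conclude that any identification made in $Y_X$ was already present in $Y'$. Your packaging of the conclusion as ``$q$ is a continuous injection into a Hausdorff space, hence $Y'$ is Hausdorff'' is just a cleaner form of the paper's explicit separation-of-points argument via the homeomorphism $\beta_h$, and both treatments dispatch the reverse implication by appeal to the earlier globalization results.
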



\begin{proof}
The reverse implication holds in light of Corollary \ref{topactisglob}, whence let us focus on the direct one. Assume then that $Y_X$ is the globalization of $X$ with global action $\beta$ and pick two distinct points $[x,g]',[y,h]'$ in $Y'$ (where $Y'$ as above denotes the coequalizer of the pair \eqref{eq:coeqTop} in $\Top$). Consider $[x,gh^{-1}]'$ and $[y,e]'$. If $[x,gh^{-1}]=[y,e]$, then $(x,gh^{-1})\in X\bul G$ and so $[x,g]'=[y,h]'$, a contradiction. Thus, $[x,gh^{-1}]\neq [y,e]$ and hence there exists $Q$ Hausdorff and $f:Y' \to Q$ such that $f([x,gh^{-1}]')\neq f([y,e]')$. By taking the preimages of two separating open sets, we find two open subsets $U,V$ of $Y'$ separating $[x,gh^{-1}]'$ from $[y,e]'$. Since $\beta_h$ is an homeomorphism, $\beta_h(U)$ and $\beta_h(V)$ are open subsets separating $[x,g]'$ from $[y,h]'$ in $Y'$.
\end{proof}

%


\section{Partial comodule algebras}\label{ssec:alg}


Let $\K$ be a commutative ring. Recall that the category $\algk$ of (unital, associative) $\K$-algebras is monoidal, the monoidal product being the tensor product of two $\K$-algebras with component-wise multiplication. In this section, we study geometric partial modules (see \S\ref{se:partact}) in the monoidal category $\op{\algk}$ or, stated otherwise, geometric partial comodules in $\algk$ (by using the dual terminology from \cite{JoostJiawey}), which we will call {\em geometric partial comodule algebras}.

Firstly, recall that a coalgebra in $\algk$ is just a $\K$-bialgebra $(H,\mu,u,\Delta,\varepsilon)$ and a global comodule over $H$ in $\algk$ is exactly a $H$-comodule algebra in the classical sense. A geometric partial comodule algebra is a quadruple $(A,A\bul H,\pi_A,\rho_A)$, where $A$ and $A\bul H$ are algebras, $\pi_A:A\ot H\to A\bul H$ is an algebra epimorphism (not necessarily surjective) and $\rho_A:A\to A\bul H$ is an algebra morphism, satisfying the counitality and coassociativity conditions dual to axioms \ref{item:QPC1} and \ref{item:QPC2}. 
Specializing Theorem \ref{thm:globalization} to this setting, we find the following result.

\begin{corollary}\label{comodalgglob}
Let $H$ be a $\K$-bialgebra which is flat as left $\K$-module and consider a geometric partial $H$-comodule algebra $(A,A\bul H,\pi_A,\rho_A)$. Set
$$Y\coloneqq\left\{\sum_i a_i\ot h_i\in A\ot H ~\Big|~ \sum_i \rho_A(a_i)\ot h_i=\sum_i \pi_A(a_i\ot h_{i(1)})\ot h_{i(2)}\right\}.$$
Then the globalization of $A$ exists 
provided that the following diagram
\begin{equation}\label{eq:pcomalgs}
\begin{gathered}
\xymatrix @!0 @C=50pt @R=17pt{
& Y \ar[dl]_-{A\ot\varepsilon} \ar@{^(->}[dr] \\
A \ar[dr]_-{\rho_A} && A\ot H \ar[dl]^-{\pi_A}\\
& A\bul H
}
\end{gathered}
\end{equation}
is a pushout in $\algk$. In this case, the globalization is given precisely by $Y$.

Moreover, in case $A\ot\varepsilon:Y\to A$ is surjective, then the above condition is satisfied provided that $\pi_A$ is surjective as well and $\ker(\pi_A)$ can be generated (as an $A\ot H$-ideal) by elements of $Y$. 
\end{corollary}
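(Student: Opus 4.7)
The plan is to deduce both halves of the statement from the dual of Theorem \ref{thm:globalization}. After dualization, its two hypotheses \ref{item:glob1} and \ref{item:glob3} translate to: (i) existence of a suitable equalizer in the category $\mod{H}$ of right $H$-comodule algebras; and (ii) the square \eqref{eq:pcomalgs} being a pushout in $\algk$.

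For (i), the dualized pair arising from \eqref{eq:glob} is $\rho_A \ot H,\; (\pi_A \ot H) \circ (A \ot \Delta) \colon A \ot H \rightrightarrows A \bul H \ot H$. Both maps are algebra morphisms, and a direct check using coassociativity shows that they are morphisms of right $H$-comodules for the regular coactions $\id \ot \Delta$. Since $H$ is flat as a left $\K$-module, the kernel of their difference in $\K$-modules, namely the submodule $Y$ of the statement, automatically inherits a subcomodule structure from $A \ot H$; it is visibly a subalgebra. Hence $Y$ realises the equalizer in $\mod{H}$. Under the dualization, the map $\kappa \circ (X \ot u)$ from Theorem \ref{thm:globalization} corresponds to $Y \hookrightarrow A \ot H \xrightarrow{A \ot \varepsilon} A$; applying $A \bul H \ot \varepsilon$ to the defining equation of $Y$ and using the counit law yields $\pi_A|_Y = \rho_A \circ (A \ot \varepsilon)|_Y$, so the dual of \eqref{eq:GXglob} is exactly \eqref{eq:pcomalgs}. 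The first assertion of the corollary then follows directly from Theorem \ref{thm:globalization}.

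The technical core is the \emph{moreover} part: we must show that the three additional hypotheses force \eqref{eq:pcomalgs} to be a pushout in $\algk$. Given a cocone $(f \colon A \to Z,\, g \colon A \ot H \to Z)$ of algebra maps with $g|_Y = f \circ (A \ot \varepsilon)|_Y$, surjectivity of $\pi_A$ makes any factoring $\phi \colon A \bul H \to Z$ unique and reduces its existence to $g(\ker(\pi_A)) = 0$. By the ideal-generation hypothesis it suffices to check $g(s) = 0$ for $s$ in a generating set $S \subseteq Y$. The key ingredient here is that the counital splitting of $\rho_A$ coming from (the dual of) \ref{item:QPC1} makes $\rho_A$ injective; combined with $\pi_A(s) = 0$ and the identity $\pi_A|_Y = \rho_A \circ (A \ot \varepsilon)|_Y$ established above, this forces $(A \ot \varepsilon)(s) = 0$ and hence $g(s) = f((A \ot \varepsilon)(s)) = 0$. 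The identity $\phi \circ \rho_A = f$ is then obtained from surjectivity of $A \ot \varepsilon \colon Y \to A$: writing $a = (A \ot \varepsilon)(y)$ for some $y \in Y$ one computes $\phi(\rho_A(a)) = \phi(\pi_A(y)) = g(y) = f(a)$. The main obstacle to anticipate is precisely this interplay: surjectivity of $\pi_A$ with the ideal hypothesis lets $g$ descend through $\pi_A$, while surjectivity of $A \ot \varepsilon|_Y$ ensures the descended map also recovers $f$ through $\rho_A$; injectivity of $\rho_A$ (via counitality) is what links the two.
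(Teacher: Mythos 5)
Your proposal is correct and follows the paper's strategy for the first half essentially verbatim: flatness of $H$ lets the equalizer defining the globalization be computed in $\K$-modules, yielding $Y$, and condition \ref{item:glob3} of Theorem \ref{thm:globalization} dualizes to the pushout requirement on \eqref{eq:pcomalgs}. For the \emph{moreover} part the routes diverge slightly in presentation though not in substance: the paper invokes the explicit description of a pushout of a span $R \xleftarrow{f} S \xrightarrow{g} T$ in $\algk$ with $g$ surjective as $R/\langle f(\ker(g))\rangle$, concluding that \eqref{eq:pcomalgs} is a pushout if and only if $\pi_A$ is surjective and $\ker(\pi_A)$ is the ideal generated by $Y \cap \ker(A\ot\varepsilon)$; you instead verify the universal property by hand. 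Both arguments ultimately rest on the same two facts, namely the identity $\pi_A|_Y = \rho_A\circ(A\ot\varepsilon)|_Y$ obtained by applying $A\bul H\ot\varepsilon$ to the defining equation of $Y$, and the split injectivity of $\rho_A$ coming from counitality. Your write-up has the small merit of making explicit the step the paper leaves implicit: that any ideal generator of $\ker(\pi_A)$ lying in $Y$ is automatically annihilated by $A\ot\varepsilon$ (precisely because $\rho_A$ is injective), which is what reconciles the stated hypothesis ``generated by elements of $Y$'' with the a priori stronger condition ``generated by elements of $Y\cap\ker(A\ot\varepsilon)$''. No gaps.
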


\begin{proof}
We know from Theorem \ref{thm:globalization} that the globalization of $A$, if it exists, should be given by the equalizer 
\begin{equation}\label{globcomalg}
\begin{gathered}
\xymatrix @C=25pt{
Y\ar[r]^-\kappa & A\ot H \ar@<.5ex>[rr]^-{\rho_A\ot H} \ar@<-.5ex>[rr]_-{(\pi_A\ot H)\circ(A\ot \Delta)} && A\bul H\ot H
}
\end{gathered}
\end{equation}
computed in $\algk^{H}$ (the $H$-comodules in $\algk$). Since $H$ is flat as a left $\K$-module, this equalizer can be computed in $\algk$, and hence in $\Rmod{\K}$ (or even $\Set$). Therefore, it is given by the set $Y$ in the statement. The second condition of Theorem \ref{thm:globalization} states exactly that diagram \eqref{eq:pcomalgs} is a pushout square in $\algk$. 

For the last statement, recall that the pushout of a span $R \xleftarrow{f} S \xrightarrow{g} T$ in $\algk$ where $g$ is a surjective map is given by $\left(R/\langle f(\ker(g)) \rangle,p_R,\tilde{f}\right)$
where $\langle f(\ker(g)) \rangle$ is the two-sided ideal in $R$ generated by $f(\ker(g))$, $p_R$ is the canonical projection and $\tilde{f}$ is the unique map such that $\tilde{f}\circ g = p_R \circ f$. By applying this, we see that the diagram in the statement is indeed a pushout if and only if $\pi_A$ is surjective and $\ker(\pi_A)$ is the ideal generated by all $\sum_i a_i\ot h_i\in Y$ such that $\sum_ia_i\varepsilon(h_i)=0$. 
\end{proof}

Let us remark that, in general, $\pi_A$ is not necessarily surjective and geometric partial comodule algebras are not always globalizable (see \cite[Example 3.6]{Saracco-Vercruysse} for an explicit example). 
We will now describe a particular class of geometric partial comodule algebras for which the globalization always exists. 

\begin{definition}[{\cite{Caenepeel-Janssen}}]\label{def:CJ}
A (right) \emph{algebraic\footnote{the prefix `algebraic' is not standard in literature, but we use it here to distinguish these objects from geometric partial comodule algebras as introduced above.} partial comodule algebra} over a bialgebra $H$ is an algebra $A$ with a $\K$-linear map $\delta_A:A\to A\ot H, a\mapsto a_{[0]}\ot a_{[1]},$ such that
\begin{enumerate}[label=(\roman*),ref=(\roman*),leftmargin=0.75cm]
\item $\delta_A(ab) = \delta_A(a)\delta_A(b)$, \ie $(ab)_{[0]} \ot (ab)_{[1]} = a_{[0]}b_{[0]} \ot a_{[1]}b_{[1]};$
\item\label{item:pca2} $(A\ot \varepsilon)\delta_A(a) = a$;
\item $\left(\delta_A \ot H\right)\delta_A(a) = \left(\delta_A(1_A)\ot H\right)\cdot \left(A\ot \Delta\right)\delta_A(a)$, \ie
\begin{equation}\label{eq:coass}
a_{[0][0]} \ot a_{[0][1]} \ot a_{[1]} = 1_{A[0]}a_{[0]} \ot 1_{A[1]}a_{[1](1)}  \ot a_{[1](2)}.\footnote{In \cite{Caenepeel-Janssen}, $1_{A[0]}\otimes 1_{A[1]}$ appears on the right: $a_{[0][0]} \ot a_{[0][1]} \ot a_{[1]} = a_{[0]}1_{A[0]} \ot a_{[1](1)}1_{A[1]}  \ot a_{[1](2)}$. Here we resort to the convention used in \cite{AlvesBatista}, for the sake of consistency with what follows. This change of side is harmless.}
\end{equation}
\end{enumerate}
\end{definition}

It has been shown in \cite[Example 4.9]{JoostJiawey} that any partial comodule algebra over $H$ in the sense of Definition \ref{def:CJ} is a geometric partial comodule in the category $\algk$. Briefly, set 
$e':=1_A\ot 1_H-1_{A[0]}\ot 1_{A[1]}$, which is an idempotent.
Consider the canonical projection
\begin{equation}\label{eq:piA}
\pi_A 
:A\ot H \to \frac{A\ot H}{\langle e'\rangle}.
\end{equation}
Setting $A\bul H:= A\ot H/\langle e'\rangle$ and $\rho_A \coloneqq \pi_A \circ \delta_A$ 
provides a geometric partial comodule structure on $A$ in the category of $\K$-algebras.

\begin{theorem}\label{prop:globPCA}
Let $H$ be a left flat $\K$-bialgebra and $(A,\delta_A)$ an algebraic partial $H$-comodule algebra. Then the associated geometric partial $H$-comodule algebra $(A,A\bul H,\pi_A,\rho_A)$ is globalizable. 
\end{theorem}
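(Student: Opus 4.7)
The plan is to apply the last assertion of Corollary \ref{comodalgglob}. Since $\pi_A$ is, by construction, the canonical projection $A\ot H \twoheadrightarrow (A\ot H)/\langle e'\rangle$, it is automatically surjective, and $\ker(\pi_A) = \langle e'\rangle$. Thus it suffices to check that (i) $A\ot \varepsilon:Y\to A$ is surjective, and (ii) $e'$ itself lies in $Y$, so that $\ker(\pi_A)$ is generated as an $A\ot H$-ideal by an element of $Y$.

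For (i), I would prove the stronger statement that $\delta_A(a)\in Y$ for every $a\in A$. Unpacking the defining condition of $Y$ with $\sum_i a_i\ot h_i = a_{[0]}\ot a_{[1]}$, the condition becomes
\[
\pi_A(\delta_A(a_{[0]}))\ot a_{[1]} \;=\; \pi_A(a_{[0]}\ot a_{[1](1)})\ot a_{[1](2)}.
\]
Rewriting the left-hand side as $\pi_A(a_{[0][0]}\ot a_{[0][1]})\ot a_{[1]}$ and applying the coassociativity axiom \eqref{eq:coass}, it transforms into $\pi_A(1_{A[0]}a_{[0]}\ot 1_{A[1]}a_{[1](1)})\ot a_{[1](2)}$. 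Since $\pi_A$ is an algebra morphism and $\pi_A(1_{A[0]}\ot 1_{A[1]}) = \pi_A(1_A\ot 1_H)$ (by definition of $e'$ and of $\langle e'\rangle$), this equals $\pi_A(a_{[0]}\ot a_{[1](1)})\ot a_{[1](2)}$, as required. Counitality \ref{item:pca2} then gives $(A\ot\varepsilon)(\delta_A(a))=a$, so $A\ot\varepsilon\colon Y\to A$ is surjective.

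For (ii), I would verify $e'\in Y$ by a direct computation. The condition reads
\[
\pi_A(\delta_A(1_A))\ot 1_H - \pi_A(\delta_A(1_{A[0]}))\ot 1_{A[1]} \;=\; \pi_A(1_A\ot 1_H)\ot 1_H - \pi_A(1_{A[0]}\ot 1_{A[1](1)})\ot 1_{A[1](2)}.
\]
The first summands on each side coincide because $\pi_A(\delta_A(1_A))=\pi_A(1_{A[0]}\ot 1_{A[1]})=\pi_A(1_A\ot 1_H)$. For the second summands, applying \eqref{eq:coass} to $a=1_A$ yields $1_{A[0][0]}\ot 1_{A[0][1]}\ot 1_{A[1]} = 1_{A[0]}1_{A[0]}\ot 1_{A[1]}1_{A[1](1)}\ot 1_{A[1](2)}$, and exactly the same multiplicativity plus $\pi_A(1_{A[0]}\ot 1_{A[1]})=\pi_A(1_A\ot 1_H)$ argument as before identifies the two expressions.

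Having established (i) and (ii), Corollary \ref{comodalgglob} delivers the globalization of $(A,A\bul H,\pi_A,\rho_A)$, which is explicitly $Y$ with the $H$-comodule algebra structure inherited from $(A\ot H, A\ot \Delta)$. I do not expect any serious obstacle here: everything reduces to a careful but routine bookkeeping of Sweedler-type notation, and the only conceptual ingredient is the observation that modding out by $\langle e'\rangle$ precisely forces the identity $\pi_A(1_{A[0]}\ot 1_{A[1]})=\pi_A(1_A\ot 1_H)$ that makes both (i) and (ii) go through.
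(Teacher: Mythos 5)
Your proposal is correct and follows essentially the same route as the paper's proof: both verify the two hypotheses of the final assertion of Corollary \ref{comodalgglob} by showing that $\delta_A$ takes values in $Y$ (whence $A\ot\varepsilon|_Y$ is surjective, via the coassociativity axiom \eqref{eq:coass} together with $\pi_A(\delta_A(1_A))=\pi_A(1_A\ot 1_H)$) and that the generator $e'$ of $\ker(\pi_A)$ lies in $Y$. The only cosmetic difference is that the paper packages the second check as the single computation $(\delta_A\ot H - A\ot\Delta)(e')=(e'\ot 1_H)(\dots)\in\ker(\pi_A)\ot H$, while you split it into the two summands of $e'$; the content is identical.
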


\begin{proof}
Let $Y$ be as in Corollary \ref{comodalgglob}.
In light of \eqref{eq:coass} and the definition \eqref{eq:piA} of $\pi_A$, the $\K$-linear map $\delta_A$ satisfies
\[\left(\rho_A\ot H\right)\circ\delta_A = \left(\pi_A\ot H\right) \circ \left(A \ot \Delta\right) \circ \delta_A.\]
As a consequence, there exists a unique $\K$-linear morphism $\vartheta:A \to Y$ such that $\kappa\circ \vartheta = \delta_A$ (that is, $\delta_A$ takes values in $Y$) and hence, since $\id_A = (A\ot  \varepsilon) \circ \delta_A$ in view of \ref{item:pca2}, 
$A\ot  \varepsilon$ is still surjective when restricted to $Y$. Moreover, as explained above the proposition, $\ker(\pi_A)$ is generated as an ideal in $A\ot H$ by the element $e'=1_A\ot 1_H-1_{A[0]}\ot 1_{A[1]}$. By denoting $1_A=1$, we find that
\begin{align*}
& (\delta\ot H-A\ot \Delta)(e') \\
 & \stackrel{\phantom{\eqref{eq:coass}}}{=} 1_{[0]}\ot 1_{[1]}\ot 1_H - 1\ot 1_H\ot 1_H - 1_{[0][0]}\ot 1_{[0][1]}\ot 1_{[1]} + 1_{[0]}\ot 1_{[1](1)}\ot 1_{[1](2)}\\
 & \stackrel{\eqref{eq:coass}}{=} - e'\ot 1_H - 1_{[0]}1_{[0]}\ot 1_{[1]}1_{[1](1)}\ot 1_{[1](2)} + 1_{[0]}\ot 1_{[1](1)}\ot 1_{[1](2)} \\
 & \stackrel{\phantom{\eqref{eq:coass}}}{=} (e'\ot 1_H)(1_{[0]}\ot 1_{[1](1)}\ot 1_{[1](2)} - 1 \ot 1_H \ot 1_H) \in \ker(\pi_A)\ot H
\end{align*}
Therefore, $e'\in Y$ and hence $A$ is globalizable by Corollary \ref{comodalgglob}.
\end{proof}


From now on (following \cite{AlvesBatista}), let us assume that $\K$ is a field.
The injective map $\vartheta:A\to Y$ from the proof of Theorem \ref{prop:globPCA} is clearly multiplicative (as it is induced by the multiplicative map $\delta_A$). One could consider the smallest subcomodule algebra 
of $Y$ containing $\vartheta(A)$. This leads to the notion of enveloping coaction in the sense of \cite{AlvesBatista}. More precisely, an enveloping coaction for an algebraic partial $H$-comodule algebra $A$ is a (global) comodule algebra $(B,\delta_B: b\mapsto b^{[0]}\ot b^{[1]})$ with an injective multiplicative map $\theta:A\to B$ such that
\begin{enumerate}[label=(\alph*),ref=(\alph*),leftmargin=0.7cm]
\item $\theta(A)$ is a unital right ideal of $B$ generated by $e \coloneqq \theta(1_A)$,
\item\label{item:gpca2} $B$ is generated by $\theta(A)$ as an $H$-comodule algebra and 
\item $(\theta \ot H) \circ \delta_A = (\theta(1_A)\ot H)\cdot \left(\delta_B \circ \theta\right)$ or, equivalently, for all $a\in A$
\begin{equation}\label{eq:thetadelta}
\theta(a_{[0]})\ot a_{[1]}= e\theta(a)^{[0]}\ot \theta(a)^{[1]}.
\end{equation}
\end{enumerate}
In \cite[Theorem 4]{AlvesBatista}, it was proven that the enveloping coaction of any algebraic partial comodule algebra exists. More precisely, $B$ can be constructed as the $H$-subcomodule algebra of $A\ot H$ (which is a right $H$-comodule via $A\ot \Delta$) generated by all elements of the form $a_{[0]}\ot a_{[1](1)}f(a_{[1](2)})$, with $a\in A$ and $f\in H^*$. The morphism $\theta:A\to B$ is then given by $\delta_A$ and $e=1_{A[0]}\ot 1_{A[1]}$. 
One may check that $\theta$ corestricts to an isomorphism $\theta:A\to eB$. Hence we can consider the projection of algebras $p:B\to A, b\mapsto \theta^{-1}(eb)$. In the realization of $B$ as above, we find that $p(\sum_i a_i\ot h_i)=1_{A[0]}a_i\varepsilon(1_{A[1]}h_i)$ for any $\sum_ia_i\ot h_i\in B$.


\begin{proposition}\label{pr:comodalg}
Given a partial $H$-comodule algebra $A$ over a field $\K$, the enveloping coaction $B$ of $A$ in the sense of \cite{AlvesBatista} is a subcomodule algebra of the globalization $Y_A$ of $A$. Namely, there is a unique comodule algebra monomorphism $j:B \to Y_A$ such that one of the following (equivalent) conditions hold:
\begin{enumerate}[label=(\Roman*),ref=(\Roman*),leftmargin=0.7cm]
\item $\epsilon_A\circ j=p$;
\item\label{item:comodalg2} $\kappa\circ j=(p\ot H)\circ \delta_B$.
\end{enumerate}
In particular, $B$ is co-generated by $A$ in the sense of \cite[Definition 2.10]{Saracco-Vercruysse} (i.e. $(p\ot H)\circ \delta_B$ is a monomorphism).
\end{proposition}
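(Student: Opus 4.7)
The key observation is that $B$ sits inside $A \otimes H$ by construction (as a subcomodule algebra) while $Y_A$ sits inside $A \otimes H$ via the equalizer inclusion $\kappa: Y_A \hookrightarrow A \otimes H$ of Corollary \ref{comodalgglob}. The plan is to show $B \subseteq Y_A$, take $j$ to be the resulting inclusion, and read off the conditions. To verify $\delta_A(A) \subseteq Y_A$, pick $a \in A$; using $\rho_A = \pi_A \circ \delta_A$ and the coassociativity axiom \eqref{eq:coass},
\[
(\rho_A \otimes H)\delta_A(a) = \pi_A\big(a_{[0][0]} \otimes a_{[0][1]}\big) \otimes a_{[1]} = \pi_A\big((1_{A[0]} \otimes 1_{A[1]})(a_{[0]} \otimes a_{[1](1)})\big) \otimes a_{[1](2)}.
\]
Since $e' = 1 \otimes 1 - 1_{A[0]} \otimes 1_{A[1]} \in \ker \pi_A$, we have $\pi_A(1_{A[0]} \otimes 1_{A[1]}) = 1_{A \bullet H}$, so the right-hand side collapses to $\pi_A(a_{[0]} \otimes a_{[1](1)}) \otimes a_{[1](2)} = (\pi_A \otimes H)(A \otimes \Delta)\delta_A(a)$, confirming $\delta_A(a) \in Y_A$. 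As $Y_A$ is an equalizer in $\vectk$ of two $\K$-algebra morphisms that are simultaneously $H$-colinear, it is a subcomodule algebra of $(A \otimes H, A \otimes \Delta)$. By \ref{item:gpca2}, $B$ is the smallest subcomodule algebra of $A \otimes H$ containing $\theta(A) = \delta_A(A)$, hence $B \subseteq Y_A$. Take $j: B \hookrightarrow Y_A$ to be the resulting inclusion; it is automatically a comodule algebra monomorphism, and $\kappa \circ j$ is the canonical inclusion $\iota_B: B \hookrightarrow A \otimes H$.

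To check (I), I would use the explicit formula $p(\sum_i a_i \otimes h_i) = \sum_i 1_{A[0]} a_i \, \varepsilon(1_{A[1]} h_i)$ from the excerpt: since $\varepsilon$ is multiplicative and $1_{A[0]}\varepsilon(1_{A[1]}) = 1_A$ by \ref{item:pca2}, this simplifies to $\sum_i a_i \varepsilon(h_i) = (A \otimes \varepsilon)\iota_B\big(\sum_i a_i \otimes h_i\big)$, whence $p = (A \otimes \varepsilon) \circ \kappa \circ j = \epsilon_A \circ j$. For the equivalence (I)$\Leftrightarrow$(II) and the uniqueness of $j$, I would invoke that $(A \otimes H, A \otimes \varepsilon)$ is the cofree right $H$-comodule on $A$: every right $H$-colinear map $\phi: B \to A \otimes H$ satisfies $\phi = ((A \otimes \varepsilon) \circ \phi \otimes H) \circ \delta_B$. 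Applied to $\phi = \kappa \circ j$ this gives $\kappa \circ j = ((\epsilon_A \circ j) \otimes H) \circ \delta_B$, yielding the equivalence; uniqueness of $j$ then follows because $\kappa$ is a monomorphism.

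Finally, $(p \otimes H) \circ \delta_B = \kappa \circ j$ is the composite of two monomorphisms, hence a monomorphism, so $B$ is co-generated by $A$ in the sense of \cite[Definition 2.10]{Saracco-Vercruysse}. The only substantive calculation is the verification $\delta_A(A) \subseteq Y_A$, which hinges on the interplay between \eqref{eq:coass} and the defining relation $e'$ of $\pi_A$; everything else follows formally from the cofree-comodule adjunction.
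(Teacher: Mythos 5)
Your proof is correct, but it reaches the conclusion by a genuinely different route than the paper. The paper starts from the map $\varkappa \coloneqq (p\ot H)\circ\delta_B$ and verifies by direct computation (using \eqref{eq:thetadelta} and \eqref{eq:coass}) that $(\rho_A\ot H)\circ\varkappa$ and $(\pi_A\ot H)\circ(A\ot \Delta)\circ\varkappa$ agree on $\theta(A)$; since both are comodule algebra maps and $B$ is generated by $\theta(A)$ as a comodule algebra, they agree on all of $B$, so $\varkappa$ factors through the equalizer $Y_A$, producing $j$. Injectivity then needs a second computation showing that $\varkappa$ restricts to the inclusion $B\subset A\ot H$ on $\theta(A)$, again extended by the generation property. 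You argue in the opposite direction: you show $\delta_A(A)\subseteq Y_A$ (which is essentially the computation already carried out in the proof of Theorem \ref{prop:globPCA}), observe that $Y_A$ is a subcomodule algebra of $(A\ot H, A\ot\Delta)$ because it is the equalizer of two colinear algebra maps, and invoke the minimality of $B$ among subcomodule algebras of $A\ot H$ containing $\theta(A)=\delta_A(A)$ to conclude $B\subseteq Y_A$. The monomorphism property of $j$ then comes for free, and the identification $\kappa\circ j=(p\ot H)\circ\delta_B$ follows from cofreeness of $(A\ot H,A\ot\Delta)$ together with your (correct) check that $p$ restricts to $A\ot\varepsilon$ on $B$. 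Both arguments rest on the same two ingredients --- coassociativity \eqref{eq:coass} and the generation of $B$ by $\theta(A)$ --- but yours replaces the paper's two explicit computations on $\theta(A)$ by a single containment plus the cofree-comodule adjunction, which is arguably cleaner; the trade-off is that you lean on the concrete realization of $B$ inside $A\ot H$ from the outset, whereas the paper only needs it for the final injectivity step.
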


\begin{proof}
In view of the foregoing discussion, we can identify $A$ with $eB$, $\theta$ with the inclusion map and $p$ with left multiplication by $e$. Under this identification, we find
\begin{align*}
\big((\delta_A\ot H)\circ (p\ot H)\circ \delta_B\big)(a) & \stackrel{\phantom{\eqref{eq:coass}}}{=} (ea^{[0]})_{[0]}\ot (ea^{[0]})_{[1]}\ot a^{[1]} \stackrel{\eqref{eq:thetadelta}}{=} a_{[0][0]}\ot a_{[0][1]}\ot a_{[1]} \\
 & \stackrel{\eqref{eq:coass}}{=} 1_{A[0]}a_{[0]}\ot 1_{A[1]}a_{[1](1)}\ot a_{[1](2)} \qquad \text{and} \\
\big((A\ot \Delta)\circ (p\ot H)\circ \delta_B\big)(a) & \stackrel{\phantom{\eqref{eq:coass}}}{=} e a^{[0]}\ot a^{[1]}_{(1)}\ot a^{[1]}_{(2)} \stackrel{\eqref{eq:thetadelta}}{=} a_{[0]}\ot a_{[1](1)}\ot a_{[1](2)}
\end{align*}
for all $a\in A$. By using the fact that $\ker(\pi_A)$ is generated by the element $1_A\ot 1_H-1_{A[0]}\ot 1_{A[1]}$, we can conclude that $\varkappa \coloneqq (p\ot H)\circ \delta_B : B \to A \ot H$ satisfies
$$(\rho_A\ot H)\circ \varkappa \circ \theta = (\pi_A\ot H)\circ(\delta_A\ot H)\circ \varkappa \circ \theta = (\pi_A\ot H)\circ (A\ot \Delta)\circ \varkappa \circ \theta.$$
Since $(\rho_A\ot H)\circ \varkappa$ and $(\pi_A\ot H)\circ (A\ot \Delta)\circ \varkappa$ are $H$-comodule algebra maps and since $B$ is generated by $\theta(A)$ as $H$-comodule algebra, we can conclude that $(\rho_A\ot H)\circ \varkappa = (\pi_A\ot H)\circ (A\ot \Delta)\circ \varkappa$.
Thus, being $Y_A$ given by the equalizer \eqref{globcomalg}, we find that there is a unique morphism of $H$-comodule algebras $j:B\to Y_A$ such that $\kappa\circ j=\varkappa$. By composing the last identity with $A\ot \varepsilon$, we find that $\epsilon_A\circ j= p$.

Finally remark that by the identity \ref{item:comodalg2} and the fact that $\kappa$ is a monomorphism, $j$ is injective if and only if $\varkappa = (p\ot H)\ot \delta_B$ is so. Consider the explicit realization of $B$ as subcomodule algebra of $A\ot H$ from \cite[proof of Theorem 4]{AlvesBatista} as recalled above. For any element $a_{[0]}\ot a_{[1]}\in \theta(A)$, we find 
$$\varkappa(a_{[0]}\ot a_{[1]})=1_{A[0]}a_{[0]}\varepsilon(1_{A[1]}a_{[1](1)})\ot a_{[1](2)} \stackrel{\eqref{eq:coass}}{=} a_{[0][0]}\varepsilon(a_{[0][1]})\ot a_{[1]}=a_{[0]}\ot a_{[1]},$$
whence $\varkappa = (p\ot H)\ot \delta_B$ coincides with the inclusion $B\subset A\ot H$ on $\theta(A)$ and since both maps are comodule algebra morphisms and $B$ is generated by $\theta(A)$ as comodule algebra, we find that $(p\ot H)\ot \delta_B$ coincides with the inclusion on the whole of $B$. In particular, $(p\ot H)\ot \delta_B$ is injective.
\end{proof}

We conclude this paper by providing some examples that show how, in general, the enveloping coaction differs from the globalization. 


\begin{example}[{\cite[Example 1]{AlvesBatista}}]\label{ex:AB1}
Let $G$ be a finite group. If $N$ is a normal subgroup of $G$ and $\mathsf{char}(\K)\nmid |N|$, then $t=\frac{1}{|N|}\sum_{n\in N}n\in \K N$ is a central idempotent in $H \coloneqq \K G$. Notice also that $t$ is an integral in $\K N$, in the sense that $nt= t = tn$ for all $n\in N$.
Let $A\coloneqq t \, \K G$ be the (unital) ideal generated by $t$, let $p:H \to A, h\mapsto th,$ be the canonical projection and let $\iota:A \to H$ be the inclusion. Consider the partial $\K G$-coaction on $A$ given by 
\begin{equation}\label{eq:deltaAex1}
\delta_A(tg) = (t \ot 1)\Delta(tg) = tt_1g\ot t_2g = tg\ot tg. 
\end{equation}
In this case, $A \cong \K[G/N]$ (the group algebra over $G/N$) with partial coaction given by the composition $A \xrightarrow{\Delta_A} A \ot A \xrightarrow{A \ot \iota} A \ot H$ and $A \bul H$ given by
\[
\frac{A\ot H}{\langle t \ot 1 - \delta_A(t) \rangle} \stackrel{\eqref{eq:deltaAex1}}{=} \frac{A\ot H}{\langle t \ot 1 - t \ot t \rangle} = \frac{A \ot H}{(t\ot 1 - t \ot t)(A\ot H)} = \frac{A\ot H}{A\ot (1-t)H},
\]
which is isomorphic to $A\ot A$ via the factorization through the quotient of the projection $\pi_A \coloneqq (A\ot p) : A\ot H \to A\ot A$. 

To construct the globalization of $A$, choose a family $\{g_1,\ldots, g_r\}$ of representatives of the right cosets of $N$ in $G$ \big(\ie $G=\underset{i=1}{\overset{r}{\sqcup}} Ng_i$\big) and observe that $\{tg_1, \ldots, tg_r\}$ forms a basis of $A$.
Since $\rho_A = \pi_A \circ \delta_A$, we have that
$
z = \sum_{i=1}^r\sum_{g\in G} c_{i,g}(tg_i)\ot g \in A \ot H
$
belongs to $Y_A = \mathsf{Eq}(\rho_A\ot H,(\pi_A\ot H)\circ(A\ot \Delta))$ if and only if 
\[
\sum_{i=1}^r\sum_{g\in G} c_{i,g}\left(tg_i \ot tg_i \ot g - tg_i \ot g \ot g\right) \in \ker(\pi_A \ot H) = A \ot (1-t)H \ot H,
\]
if and only if $tg_i - g \in (1-t)H$, for all $g\in G$ and all $i=1,\ldots,r$ such that $c_{i,g} \neq 0$.
Thanks to the fact that $nt= t = tn$ for all $ n\in N$, one may now check directly that, in fact, $z\in Y_A$ if and only if $c_{i,g}\neq 0$ only for $g\in Ng_i$, that is to say, $Y_A = \mathsf{span}_{\K}\left\{ tg \ot g \mid g \in G\right\}$, which is the enveloping coaction as shown in \cite{AlvesBatista}.
\end{example}

\begin{example}[{\cite[Example 2]{AlvesBatista}}]\label{ex:AB2}
Let $H_4$ be Sweedler's four dimensional Hopf algebra, $H_4 = \K\langle g,x \mid g^2=1 , x^2 = 0, xg = -gx\rangle$, with $g$ group-like and $\Delta(x) = x\ot 1- g\ot x$, $\varepsilon(x)=0$. For any $\alpha\in\K$, the element $f = \frac{1}{2}\left(1 + g + \alpha gx\right)$ is an idempotent in $H_4$ and, by identifying $H_4$ with $\K\ot H_4$ in the canonical way, the assignment 
$
\delta_{\K} : \K \to H_4, \lambda \mapsto \lambda f,
$ 
defines a structure of partial $H_4$-comodule algebra on $\K$. In this case, $f = \delta_\K(1_\K)$ 
and $\K\bul H_4 = \K\ot H_4/\langle 1-\delta(1)\rangle = H_4/\langle 1-f \rangle$. A straightforward check reveals that $\langle 1-f \rangle = \ker(\varepsilon)$ 
and hence $\K\bul H_4 \cong \K$ via $\varepsilon: H_4 \to \K$. Therefore, $\K$ has the trivial partial $H_4$-comodule structure $(\K,H_4,\varepsilon,\id_\K)$ and so $Y_\K = H_4$, which strictly contains $\mathsf{span}_\K\{1,f\}$, that is the enveloping coaction according to \cite{AlvesBatista}.

In a similar way, one can check that the globalization of the partial comodule algebra from \cite[Example 3]{AlvesBatista} strictly contains the enveloping coaction.
\end{example}

\addtocontents{toc}{\protect\setcounter{tocdepth}{2}}


\begin{thebibliography}{99}

\bibitem{Abadie} F. Abadie, \emph{Enveloping actions and Takai duality for partial actions}. J. Funct. Anal. \textbf{197} (2003), no. 1, 14--67.


%

\bibitem{AlvesBatista} M. M. S. Alves, E. Batista, \emph{Globalization theorems for partial Hopf (co)actions, and some of their applications}. Groups, algebras and applications, 13--30, Contemp. Math., \textbf{537}, Amer. Math. Soc., Providence, RI, 2011.

%
%




%








\bibitem{Caenepeel-Janssen} S. Caenepeel, K. Janssen, \emph{Partial (Co)Actions of Hopf Algebras and Partial Hopf-GaloisTheory}. Communications in Algebra \textbf{36}:8 (2008), 2923--2946.


%
%

%
%
%
%
%
%
%
%
%
%

\bibitem{Escardo}
M. Escard\'o, R. Heckmann, \emph{Topologies on spaces of continuous functions}. Proceedings of the 16th Summer Conference on General Topology and its Applications (New York). Topology Proc. \textbf{26} (2001/02), no. 2, 545--564.

\bibitem{Exel1} R. Exel, \emph{Circle Actions on $C^*$-Algebras, Partial Automorphisms and Generalized Pimsner-Voiculescu Exect Sequences}. J. Funct. Anal. \textbf{122} (1994), 361--401.

\bibitem{Exel-set} R. Exel, \emph{Partial actions of groups and actions of inverse semigroups}. Proc. Am. Math. Soc. \textbf{126}, No. 12 (1998), 3481-3494.

\bibitem{ExelBig} R. Exel, \emph{Partial dynamical systems, Fell bundles and applications}. Mathematical Surveys and Monographs, \textbf{224}. American Mathematical Society, Providence, RI, 2017.


%
%

\bibitem{Hollings} C. Hollings, \emph{Partial actions of monoids}. Semigroup Forum \textbf{75} (2007), no. 2, 293--316.

\bibitem{JoostJiawey} J. Hu, J. Vercruysse, \emph{Geometrically Partial Actions}. Trans.\ Amer.\ Math.\ Soc. \textbf{373} (2020), 4085--4143.

\bibitem{KellendorkLawason} J. Kellendonk, M. V. Lawson, \emph{Partial actions of groups}. Internat. J. Algebra Comput. \textbf{14} (2004), 87--114.




%



\bibitem{MacLane} S. MacLane, \emph{Categories for the working mathematician}. Graduate Texts in Mathematics, Vol. \textbf{5}. Springer-Verlag, New York-Berlin, 1971.



\bibitem{Megrelishvili} M. Megrelishvili, L. Schr\"oder, \emph{Globalization of confluent partial actions on topological and metric spaces}. Topology Appl. \textbf{145} (2004), no. 1-3, 119--145.


%
%

\bibitem{Saracco-Vercruysse} P. Saracco, J. Vercruysse, \emph{Globalization for geometric partial comodules}. Preprint (2020). \href{https://arxiv.org/abs/2001.07669}{arXiv: 2001.07669}

\bibitem{Saracco-Vercruysse3} P. Saracco, J. Vercruysse, \emph{Geometric partial comodules over flat coalgebras in Abelian categories are globalizable}. In preparation.



\end{thebibliography}
\end{document}